\documentclass[11pt]{amsart}
\usepackage{amssymb, amsmath, amsthm}
\usepackage{amsrefs}
\usepackage{graphicx}
\usepackage[final]{hyperref}
\usepackage[a4paper, centering]{geometry}
\geometry{text={15cm, 22cm}}
\usepackage{color}
\usepackage{graphicx} 

\newtheorem{theorem}{Theorem}
\newtheorem{proposition}[theorem]{Proposition}
\newtheorem{lemma}[theorem]{Lemma}

\theoremstyle{definition}

\theoremstyle{remark}
\newtheorem{remark}[theorem]{Remark}
\newtheorem{example}[theorem]{Example}
\parindent=0mm

\def\R{\mathbb{R}}

\def\N{\mathbb{N}}

\def\inradius{R_\Omega}

\def \Dinf{\Delta _\infty} 
\def \L{\Lambda}
\def \l {\lambda} 
\def\Lbda{\Lambda'}
\def\Lbdb{\Lambda^*}
\def\r{r^*}

\def\JLpa{J_{\Lambda} ^{p, \lambda}}
\def\JLpja{J_{\Lambda} ^{p_j,  \lambda}}
\def\JLia{J_{\Lambda}^{\lambda}}
\def\upa{u^{p,\lambda}}
\def\upja{u^{p_j, \lambda}}

\newcommand{\Lip}[1]{{\left\| \nabla #1 \right\|}_\infty}

\DeclareMathOperator{\argmin}{argmin}
\DeclareMathOperator{\llipst}{Lip}
\DeclareMathOperator{\dist}{dist}

\begin{document}

\title[On the supremal version of the Alt--Caffarelli minimization  problem]%
{On the supremal version \\ of the Alt--Caffarelli minimization  problem }%
\author[G.~Crasta, I.~Fragal\`a]{Graziano Crasta,  Ilaria Fragal\`a}
\address[Graziano Crasta]{Dipartimento di Matematica ``G.\ Castelnuovo'', Univ.\ di Roma I\\
P.le A.\ Moro 2 -- 00185 Roma (Italy)}
\email{crasta@mat.uniroma1.it}

\address[Ilaria Fragal\`a]{
Dipartimento di Matematica, Politecnico\\
Piazza Leonardo da Vinci, 32 --20133 Milano (Italy)
}
\email{ilaria.fragala@polimi.it}

\keywords{Free boundary problems, Bernoulli constant, Lipschitz functions, convex domains, parallel sets, infinity Laplacian.}
\subjclass[2010]{Primary 35R35, Secondary 49J45, 49K20, 35N25.  }
\date{November 30, 2018}

\begin{abstract}
This is a companion paper to our recent work \cite{CF8}, where we studied the 
interior Bernoulli free boundary for the infinity Laplacian.  Here we consider its variational side, 
which corresponds to the supremal version of the Alt--Caffarelli minimization problem.  
\end{abstract}

\maketitle

\medskip
\section{Introduction}
Given a nonempty open bounded domain $\Omega \subset \R ^n $, $n\geq 2$, 
we consider the free boundary problem  
\begin{equation}\label{f:P}
m _\Lambda := \min \Big \{ J _\Lambda (u) := \Lip{u} + \Lambda | \{ u > 0 \} | \ :\ 
u\in\llipst_1(\Omega)
 \Big \} \,.\tag*{$(P)_\Lambda$}  
\end{equation}
where $\Lambda$ is a positive constant, $|\{u>0 \}|$ denotes the Lebesgue measure of the set $ \{x \in \Omega \ :\ u (x) >0 \}$, and 
\begin{equation}\label{f:lip1}
\llipst_1(\Omega):= \left\{
u\in 
W^{1,\infty}(\Omega):\
u\geq 0\ \text{in}\ \Omega,\
u=1\ \text{on}\ \partial\Omega
\right\}\,.
\end{equation}

This may be viewed as the supremal version of the Alt--Caffarelli minimization problem, 
which for $p$-growth energies reads
\begin{equation}\label{f:VPp}
\min \Big \{ \int _\Omega { |\nabla u|}  ^ p \, dx +  \Lambda | \{ u > 0 \} | \ :\ 
u\in W ^ {1, p }(\Omega)\, , \ u = 1 \hbox{ on } \partial \Omega \Big \}\,.
\end{equation}

In both the minimization problems \ref{f:P} and \eqref{f:VPp},  the free boundary is given by the set 
$$F (u):= \partial \{u > 0\} \cap \Omega \,.$$   Clearly, the free boundary will be not empty only if the parameter $\Lambda$ is taken sufficiently large, in order that
the measure term
becomes active in the competition between the two addenda  in the energy functional. 
To make the difference is 
the gradient term:  the integral functional appearing in \eqref{f:VPp}  is converted into the supremal functional $\Lip{u}$ in  \ref{f:P} . 

Problem \eqref{f:VPp} has a long history:  
starting from the groundbreaking paper \cite{AlCa}, where it was introduced in the linear case $p=2$,  it
 has been widely studied in later works for any $p \in (1, + \infty)$ (see for instance \cite{BS09, DanPet, DaKa2010, KawSha}). 
In particular, the topic which has been object of a thorough investigation is the  regularity of the free boundary, which has been settled to be 
locally analytic except for a $\mathcal H ^ {n-1}$-negligible singular set \cite{AlCa, DanPet}. 

Among the motivations behind problem \eqref{f:VPp}, of chief importance is that its minimizers  solve for a suitable constant $c>0$ the `Bernoulli problem for the $p$-laplacian', namely
\begin{equation}\label{f:Pp}
\begin{cases}
\Delta _p u = 0 & \text{ in }  \{u > 0\} \cap \Omega,
\\
u = 1 & \text{ on } \partial \Omega,
\\ 
|\nabla u| = c   
& \text{ on }  \partial \{u > 0\} \cap \Omega. 
\end{cases}  
\end{equation}
This overdetermined system, which is named  after Daniel Bernoulli (in particular from his law in hydrodynamics),  has 
many physical and industrial   
applications, not only in fluid dynamics, but also in other contexts, such as optimal insulation  and electro-chemical machining.  For a more precise description of the applied side, including several references, see \cite[Section 2]{FR97}.

In our recent paper \cite{CF8}, we have studied  the
existence  and uniqueness of solutions to the  following
`Bernoulli problem for the $\infty$-laplacian'
\begin{equation}\label{f:Pinfty}
\begin{cases}
\Delta _\infty u = 0 & \text{ in }\{u > 0\} \cap \Omega,
\\
u = 1 & \text{ on } \partial \Omega,
\\ 
|\nabla u| = \lambda    
& \text{ on } \partial \{u > 0\} \cap \Omega\,. 
\end{cases}  
\end{equation}
Recall that the $\infty$-laplacian is the degenerated nonlinear operator  defined  by
$$
\Delta _\infty u := \nabla ^ 2 u \nabla u \cdot \nabla u\qquad \forall u \in C ^ 2 (\Omega)\,.
$$
It is well-known since
Bhattacharya, DiBenedetto and Manfredi \cite{BDM} that 
solutions to 
\begin{equation}\label{f:inftyharm}
\Delta _\infty u = 0\,, 
\end{equation}
must be intended in the viscosity sense, since the differential operator  is not in divergence form. 
Moreover, it is a widely recognized fact that \eqref{f:inftyharm} may be seen 
as the fundamental PDE of Calculus of Variations in $L ^ \infty$, {\it i.e.}, an analogue of the Euler-Lagrange equation when considering variational problems for supremal functionals, the simplest of which is the $L ^ \infty$ norm of the gradient.  Indeed, as proved by Jensen \cite{Jen}, a function $u$ is a viscosity solution to  \eqref{f:inftyharm} in an open set $A$ under a Dirichlet boundary datum $g \in C ^ 0 (\partial A)$  if and only if it is an absolutely minimizing Lipschitz extension of  $g$ (this property, usually shortened as AML, means that  $u = g$ on $\partial A$ and, 
for every $U \Subset A$,  $u$ minimizes the $L ^ \infty$ norm of the gradient on $U$ among functions $v$ which agree with $u$ on $\partial U$). 
After Jensen, variational problems for supremal functionals have been object of several works,  among which we quote with no attempt of completeness  \cite{Barron, BaJeWa, ACJ, Cran}. 

In this perspective, 
the aim of the present work is to investigate the variational side of problem 
\eqref{f:Pinfty}, which is precisely the new free boundary problem of  supremal type \ref{f:P}. 

Our results are presented in the next section, which is divided into three parts:

\medskip
$\bullet$ 
In Section \ref{s:analysis}  we study the existence and uniqueness of non-constant solutions to \ref{f:P} on convex domains (see Theorems \ref{t:Linfty} and \ref{t:uniqP}). 
In particular,  we introduce  the `variational 
$\infty$-Bernoulli constant' 
\begin{eqnarray}
& \Lambda _{\Omega, \infty}   := \inf  \Big \{ \Lambda >0 \ :\ (P)_\L \text{ admits a non-constant solution }\Big \} \, ,  & \label{f:L} 
 \end{eqnarray}
 and we give a geometric characterization of it,   involving the family of parallel sets of $\Omega$. 
 This formula is obtained with the help of the Brunn-Minkowski inequality,
and allows 
to compute
explicitly the value of $\Lambda _{\Omega, \infty}$, at least for simple geometries. In particular, if  we compare $ \Lambda _{\Omega, \infty} $ with  the 
`$\infty$-Bernoulli constant',  defined in \cite{CF8} by
\[
\lambda_{\Omega, \infty} := \inf\{\lambda > 0:\ \text{\eqref{f:Pinfty} admits a non-constant solution}\}\, ,  
\]
it turns out that $\Lambda  _{\Omega, \infty} \geq \lambda _{\Omega, \infty}$; the computation on balls reveals that the inequality can be strict. 
 
 Still using its geometric characterization, we prove  that  $\Lambda _{\Omega, \infty} $  satisfies an isoperimetric inequality, namely that it is minimal on balls under a volume constraint (see Theorem \ref{t:isoper}).  This was inspired by a result in the same vein 
 by Daners and Kawohl, for the variational $p$-Bernoulli constants associated with problem \eqref{f:VPp} (see \cite{DaKa2010}).

\medskip
$\bullet$  In Section \ref{s:Bernoulli} we elucidate the relationship between solutions to problem \ref{f:P} and  solutions to the Bernoulli problem \eqref{f:Pinfty} for the $\infty$-Laplacian,  by showing they are closely related to each other. More precisely we answer the following natural questions
(see Propositions \ref{p:PP} and \ref{p:varsol}):

\begin{itemize}
\item[$Q_1$.]  Let $\L \geq \L _{\Omega,  \infty}$, so that problem \ref{f:P} admits a non-constant solution.  

\hskip -.4cm Does a solution to problem \ref{f:P}  solve problem  \eqref{f:Pinfty} for some $\lambda$? 

\smallskip
\item[$Q_2$.] Let $\l \geq \l _ {\Omega, \infty}$, so that problem \eqref{f:Pinfty} admits a non-constant solution. 

\hskip -.4cm Does a solution to problem \eqref{f:Pinfty}  solve problem \ref{f:P}  for some $\Lambda$?  
\end{itemize}

\medskip
$\bullet$  In Section \ref{s:approx} we show that problem \ref{f:P} 
 can be obtained not merely in a heuristic way, but 
by performing a rigorous passage to the limit as $p \to + \infty$ 
in a family of minimum problems of the Alt--Caffarelli type for energies with $p$-growth (see Theorem \ref{t:convmin}).  This result, which is somehow highly expected, is 
not straightforward. In fact, the passage to the limit as $p \to + \infty$  in problems of type \eqref{f:VPp} (suitably rescaled) provides a minimization problem with gradient constraint, similarly as it was shown by Kawohl and Shahgholian for {\it exterior} Bernoulli problems, see \cite{KawSha}. In order to  arrive at problem \ref{f:P}, one needs to perform a further minimization, with respect to an additional parameter which plays the role of a multiplier for the gradient constraint.

\medskip
The proofs of the results described above are given respectively in Sections \ref{sec:proof1}, \ref{sec:proof2}, and \ref{sec:proof3}.

\section{Results}

\subsection{Analysis of  problem \ref{f:P}} \label{s:analysis} 
A major role in our study is played by the parallel sets of $\Omega$ and by the functions $v_r$ defined  for every $r \in [0, R _\Omega]$  
(with $R _\Omega$:= inradius of $\Omega$)
respectively by
\[
\Omega_r := \{x\in\Omega:\ \dist(x, \partial\Omega) > r\}
\]

and
\begin{equation}\label{f:ur}
v_r(x) := \left[1 - \frac{1}{r} \dist(x, \partial\Omega)\right]_+,
\qquad x\in\overline{\Omega}\,.
\end{equation}
Clearly, for every $r\in (0, \inradius]$, we have 
\[
v_r\in\llipst_1(\Omega)\, , \quad \Lip{v_r} = 1/r\, , \quad
\{v_r > 0\}\cap \Omega = D_r := \Omega\setminus\overline{\Omega_r}\,,
\]
where $\llipst_1(\Omega)$ is the set of functions defined
in \eqref{f:lip1}.
(To be precise, we consider any function $u\in\llipst_1(\Omega)$
as an H\"older continuous function in $\overline{\Omega}$,
since $u-1\in W^{1,p}_0(\Omega)$ for every $p>1$.)

We are going to provide an explicit characterization for the variational infinity Bernoulli constant introduced in  \eqref{f:L}
and to show that, for $\Lambda \geq \Lambda _{\Omega, \infty}$,  non-contant solutions are precisely of the form \eqref{f:ur}.  

Throughout the paper, we assume with no further mention that  
\[
\text{$\Omega$ is an open bounded  \textit{convex} subset of $\R^n$, $n \geq 2$. }
\]
Moreover, since $\Omega$ will be fixed, for simplicity in the sequel we simply write $\Lambda _\infty$  in place of 
$\Lambda _{\Omega, \infty}$.

\begin{theorem}[Identification of $\Lambda _\infty$]\label{t:Linfty}
There exists a unique value  $r^*$ in the interval $(0, R _\Omega)$ such that 
\begin{equation}\label{f:r*}
\frac{|\partial\Omega_{\r}|}{|\Omega_{\r}|} = \frac{1}{\r}\,, 
\end{equation}
and the variational infinity Bernoulli constant $\Lambda _\infty$ defined in \eqref{f:L} agrees with $$\Lbdb
:= 
\frac{1}{\r |\Omega_{\r}|} \, .$$
More precisely,  we have

\smallskip
	\begin{itemize}
		\item[--]    for $\L < \Lbdb $, $(P)_\L$ admits uniquely the constant solution $u \equiv 1$; 
		\medskip
		\item[--] for $\L  = \Lbdb $, $(P)_\L$ admits the constant solution $u \equiv 1$ and the non-constant solution $v _ {r^*}$; 
		\medskip
		\item[--] for $\L > \Lbdb$, $(P)_\L$ does not admit the constant solution $u \equiv 1$ and admits the non-constant solution $v _{r_\Lambda}$, 
		where $r _\L$ is the smallest root  of the equation
$\frac{1}{r^2 |\partial\Omega_r| } = \Lambda $.
		
				\end{itemize}  
Moreover, for $\Lambda \geq \Lbdb$,
any non-constant solution $v$ to \ref{f:P} has the same Lipschitz constant and the same positivity set, namely satisfies
\begin{equation}\label{f:split} 
\Lip{v} = 1/r_\Lambda \qquad \text{ and  } \qquad \{v > 0\} = D_{r_\Lambda}\,.
\end{equation} 
(being $r _{\Lambda ^*}= r^*$).
\end{theorem}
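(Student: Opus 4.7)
The plan is to reduce the minimization of $J_\L$ over $\llipst_1(\Omega)$ to a one-parameter analysis over the family $\{v_r\}_{r \in (0, \inradius]}$ (augmented by $u \equiv 1$), and to extract all geometric content from the Brunn--Minkowski inequality applied to the inner parallel sets of $\Omega$.

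\textbf{Step 1 (Reduction to the family $v_r$).} For any $u \in \llipst_1(\Omega)$ with $L := \Lip{u} \in (0, +\infty)$, set $r := 1/L$. The Lipschitz bound combined with $u = 1$ on $\partial\Omega$ gives
\[
u(x) \geq 1 - L\, \dist(x, \partial\Omega) \qquad \text{on } \overline{\Omega}\,,
\]
so $\{u > 0\} \supseteq D_r$ and hence $J_\L(u) \geq 1/r + \L(|\Omega| - |\Omega_r|) = J_\L(v_r)$. Together with $J_\L(1) = \L|\Omega|$, this shows that the original infimum equals the minimum between $\L|\Omega|$ and the infimum of
\[
g(r) := \frac{1}{r} + \L(|\Omega| - |\Omega_r|),\qquad r \in (0, \inradius]\,.
\]

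\textbf{Step 2 (Brunn--Minkowski and the identification of $\Lbdb$).} Using $\tfrac{d}{dr}|\Omega_r| = -|\partial\Omega_r|$, I rewrite
\[
g(r) - \L|\Omega| = |\Omega_r|\bigl(\Psi(r) - \L\bigr),\qquad \Psi(r) := \frac{1}{r|\Omega_r|}\,,
\]
so that $\Psi'(r) = \frac{1}{r|\Omega_r|}\bigl(\frac{|\partial\Omega_r|}{|\Omega_r|} - \frac{1}{r}\bigr)$. Convexity of $\Omega$ gives the inclusion $\lambda \Omega_{r_1} + (1-\lambda)\Omega_{r_2} \subseteq \Omega_{\lambda r_1 + (1-\lambda)r_2}$, whence Pr\'ekopa--Leindler yields strict log-concavity of $r \mapsto |\Omega_r|$, i.e.\ strict monotonicity of $r \mapsto |\partial\Omega_r|/|\Omega_r|$. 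Since $1/r$ is strictly decreasing and the two boundary behaviours ($|\partial\Omega_r|/|\Omega_r|$ is bounded as $r \to 0$ while $1/r \to \infty$; and $|\Omega_r| \to 0$ as $r \to \inradius^-$) ensure the two sides cross, equation \eqref{f:r*} has a unique solution $\r \in (0, \inradius)$, and $\Psi$ attains its strict global minimum there, with value $\Lbdb = 1/(\r |\Omega_{\r}|)$.

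\textbf{Step 3 (Classification in $\L$).} The identity above immediately gives the three regimes: for $\L < \Lbdb$ one has $\Psi(r) > \L$ everywhere, hence $g(r) > \L|\Omega|$ and $u \equiv 1$ is the unique minimizer; for $\L = \Lbdb$ one has $g(r) \geq \L|\Omega|$ with equality iff $r = \r$, giving both declared solutions; for $\L > \Lbdb$ one has $g(\r) < \L|\Omega|$, so the minimum of $g$ is strictly below $\L|\Omega|$ and attained at an interior critical point of $g$, where $g'(r) = -1/r^2 + \L|\partial\Omega_r| = 0$, i.e.\ $\L r^2 |\partial\Omega_r| = 1$. A short computation using $|\partial\Omega_{\r}| = |\Omega_{\r}|/\r$ yields $g'(\r) = (\L - \Lbdb)/((\r)^2 \Lbdb) > 0$, so every global minimizer of $g$ lies in $(0, \r)$.

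\textbf{Step 4 (Uniqueness of the Lipschitz constant and positivity set).} Tracing equality in Step 1, any minimizer $u$ with $\Lip{u} = 1/r$ forces $\{u > 0\} = D_r$ up to a negligible set, so non-constant minimizers are in bijection with argmins of $g$. The statement then reduces to showing that such an argmin is unique and coincides with the smallest root $r_\L$ of $\L r^2 |\partial\Omega_r| = 1$ in $(0, \r)$. This is the step I expect to be the main obstacle: while the Brunn--Minkowski machinery of Step~2 controls $\Psi$ completely, the auxiliary function $r \mapsto r^2 |\partial\Omega_r|$ entering the critical-point equation is not directly governed by log-concavity and could in principle take the value $1/\L$ at several points. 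The resolution should come from a refined monotonicity analysis of $g$ on the interval $(0, \r)$, combined with $g'(\r) > 0$ (from Step 3), to ensure that $g$ is strictly decreasing up to its smallest critical point and that no competing global minimum lies to the right of $\r$.
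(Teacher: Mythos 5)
Your overall strategy---reduce to the one-parameter family $v_r$ via the pointwise bound $u \geq 1 - \Lip{u}\,\dist(\cdot,\partial\Omega)$, then study $g(r):=J_\Lambda(v_r)$ on $(0,R_\Omega]$---is exactly the paper's, and Steps 1--2 together with the cases $\Lambda\leq\Lambda^*$ of Step 3 are sound (the factorization $g(r)-\Lambda|\Omega| = |\Omega_r|\bigl(\Psi(r)-\Lambda\bigr)$ with $\Psi$ unimodal is a clean way to locate the threshold $\Lambda^*=1/(r^*|\Omega_{r^*}|)$). But the difficulty you flag in Step 4 is a genuine gap, not a technicality, and you do not close it: for $\Lambda>\Lambda^*$ you never prove that the global minimizer of $g$ is unique, nor that it coincides with the \emph{smallest} root of $\Lambda r^2|\partial\Omega_r|=1$ (a priori the smallest critical point need not be the global minimizer if $g'$ changes sign many times). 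Your $\Psi$-based identity controls the sign of $g-\Lambda|\Omega|$, but not the critical points of $g$, because $g'(r) = -1/r^2+\Lambda|\partial\Omega_r|$ involves the surface area rather than the volume of the parallel bodies; unimodality of $\Psi$ says nothing about the zeros of $g'$ on $(0,r^*)$.

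The missing ingredient, which is how the paper resolves this (Lemma~\ref{l:fl}), is a second application of Brunn--Minkowski, this time to the surface measure: $r\mapsto |\partial\Omega_r|^{1/(n-1)}$ is decreasing and concave on $[0,R_\Omega]$, while $r\mapsto r^{-2/(n-1)}$ is decreasing and convex. Since $g'(r)>0$ is equivalent to $\bigl(\Lambda|\partial\Omega_r|\bigr)^{1/(n-1)}>r^{-2/(n-1)}$, and a concave function can exceed a convex one only on an interval, the set $\{g'>0\}$ is an interval; hence $g$ is decreasing, then increasing, then decreasing. Its global minimum is therefore attained either at the first critical point (the smallest root $r_\Lambda$ of $\Lambda r^2|\partial\Omega_r|=1$, which is the unique local minimum) or at the endpoint $R_\Omega$; the latter is excluded because $g(R_\Omega)=1/R_\Omega+\Lambda|\Omega|>\Lambda|\Omega|\geq g(r^*)\geq g(r_\Lambda)$ for $\Lambda\geq\Lambda^*$. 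This yields both the uniqueness of the argmin and its identification as the smallest root, completing your Step 4; your computation $g'(r^*)>0$ then merely confirms $r_\Lambda<r^*$.
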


\begin{remark}[Computation of $m _\L$]
The above result implies in particular that  the infimum $m _\L$   of problem $(P)_\L$  can be explicitly computed as
$$m _\L = \begin{cases}
\L |\Omega| & \text{ if } \L \leq \L _\infty
\\
 \frac{1}{r_\L} + \L |D_{r_\L}| & \text{ if } \L \geq \L _\infty
 \,.
 \end{cases}$$  
\end{remark}

\begin{remark}[Infinity harmonic solution to \ref{f:P}]\label{c:corharm}
For every $\L \geq \L _\infty$, among solutions to  \ref{f:P}, there is exactly one which is infinity harmonic in its positivity set, 
namely the infinity harmonic potential $w_{r_\L}$ of $D_{r_\Lambda}$, defined as the unique solution to the Dirichlet boundary value problem 
\begin{equation}\label{f:pot} 
\begin{cases}
\Dinf w_{r_\L} = 0 & \text{ in } D_{r_\Lambda}
\\
w_{r_\L}= 1 & \text{ on } \partial \Omega
\\ 
w_{r_\L} = 0 & \text{ on } \partial \Omega_{r_\Lambda}\,.
\end{cases} 
\end{equation}
Indeed, if \ref{f:P} admits an infinity harmonic solution, it agrees necessarily with $w_{r_\L}$ because its positivity set is uniquely determined by \eqref{f:split}. To see that $w_{r_\L}$ is actually a solution of \ref{f:P}, we observe that  it has the same positivity set as $v_{r_\L}$, and a
Lipschitz constant not larger than $v_{r_\L}$ (because $w_{r _\L}$  has the AML property mentioned in the Introduction). 
Hence we have necessarily $J_\L (v_{r_\L}) = J_\L (w_{r_\L})$, yielding optimality. 
\end{remark}

\medskip

Below we give the explicit expression of $\Lambda _\infty$ for some simple geometries (the ball in any space dimension, and  the square in dimension $2$), 
and we show that it enjoys a nice isoperimetric property.  

\begin{example}[Ball]\label{e:ball}
Let $\Omega = B_R \subset \R^n$
be the $n$-dimensional open ball of radius $R$
centered at the origin. Then $\inradius = R$ and
\[
\psi(r) := \frac{|\partial B_{R-r}|}{|B_{R-r}|} = \frac{n}{R-r}\,,
\qquad
r\in [0,R).
\]
The unique solution to the equation $\psi(r) = 1/r$ is $
\r = \frac{R}{n+1}$, 
hence
\[
\Lambda_\infty (B_R) = \frac{1}{\r |\Omega_{\r}|} = 
\frac{1}{\kappa_n \r (R-\r)^{n}} = \frac{(n+1)^{n+1}}{\kappa_n n^n R^{n+1}}
\qquad
(\kappa_n := |B_1|)\,.
\]
In particular, for $n=2$ we get
\[
\Lambda_\infty (B _R) = \frac{27 }{  4\pi R^3}\,. 
\]
\end{example}

\begin{example}[Rectangle]
Let $n=2$ and $\Omega = Q_{a, b}:= (0,a)\times (0,b)$, with $0 < b \leq a$.
In this case $\inradius = b/2$ and
\[
\psi(r) :=   \frac{|\partial Q_{a-r, b-r}|}{|Q_{a-r, b-r}|} = 2 \frac{a+b-4r}{(a-2r)(b-2r)}\,,
\qquad r\in [0, b/2).
\]
The unique solution to the equation $\psi(r) = 1/r$  in the interval 
$[0, b/2)$ is
\[
\r = \frac{a+b-\sqrt{a^2+b^2-ab}}{6}
\]
In particular, for the square $Q_a:= Q_{a, a}$  we get $
\r = \frac{a}{6}$, and hence

$$
\Lambda_\infty (Q_a)= \frac{27}{2 a^3}\,.
$$ 
\end{example}

 \begin{theorem}[Isoperimetric inequality]\label{t:isoper}
Denoting by $\Omega^*$ a ball
with the same volume as $\Omega$, it holds 
$$\Lambda_\infty(\Omega) \geq \Lambda_\infty(\Omega^*)\,,$$
with  equality sign  if and only if $\Omega$ is a ball.
\end{theorem}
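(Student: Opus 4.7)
By Theorem \ref{t:Linfty} and Example \ref{e:ball}, the inequality $\L_\infty(\Omega) \geq \L_\infty(\Omega^*)$ is equivalent to showing
\[
\r\,|\Omega_\r| \;\leq\; \frac{\kappa_n n^n (R^*)^{n+1}}{(n+1)^{n+1}},\qquad R^* := \bigl(|\Omega|/\kappa_n\bigr)^{1/n}.
\]
My plan is to estimate $|\Omega_r|$ pointwise in $r$ via the Brunn-Minkowski inequality applied to the inner parallel sets of $\Omega$, and then to maximize over $r$.

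First, I would use the inclusion $\Omega_r + B_r \subseteq \Omega$, valid for every $r \in [0, \inradius]$ since $B_r(x) \subseteq \Omega$ whenever $\dist(x, \partial\Omega) > r$; combined with Brunn-Minkowski, this yields
\[
|\Omega|^{1/n} \;\geq\; |\Omega_r + B_r|^{1/n} \;\geq\; |\Omega_r|^{1/n} + \kappa_n^{1/n}\, r,
\]
which rearranges as $|\Omega_r|^{1/n} \leq \kappa_n^{1/n}(R^* - r)$. Raising to the $n$-th power and multiplying by $r$ gives
\[
r\,|\Omega_r| \;\leq\; \kappa_n\, r (R^* - r)^n\qquad \forall\, r \in [0, R^*],
\]
the extension from $[0,\inradius]$ to $[0,R^*]$ being harmless because $|\Omega_r| = 0$ for $r \geq \inradius$, while $\inradius \leq R^*$ by inclusion of the inball in $\Omega$.

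Next, a direct calculation shows that the map $r \mapsto \kappa_n r(R^* - r)^n$ attains its maximum on $[0, R^*]$ at $r = R^*/(n+1)$, with value $\kappa_n n^n (R^*)^{n+1}/(n+1)^{n+1}$; evaluating the previous bound at $r = \r$ thus yields the target inequality and hence $\L_\infty(\Omega) \geq \L_\infty(\Omega^*)$.

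For the equality case, the above chain must be saturated throughout; in particular Brunn-Minkowski holds with equality at $r = \r$. Since $|\Omega_\r| > 0$, this will force $\Omega_\r$ to be homothetic to $B_\r$ (hence a ball), and also $\Omega = \Omega_\r + B_\r$ up to null sets, so $\Omega$ itself will be a ball. I expect this equality case to be the main obstacle, as it relies on the sharp form of the Brunn-Minkowski inequality (homothety of the two summands) combined with ruling out any volume loss in the Minkowski-sum inclusion.
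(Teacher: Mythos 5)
Your proof is correct, and it reaches the same reduction as the paper --- namely that $\max_{r} r|\Omega_r| \leq \kappa_n n^n R^{n+1}/(n+1)^{n+1}$ with $R=(|\Omega|/\kappa_n)^{1/n}$ --- by a genuinely different use of Brunn--Minkowski. The paper exploits Brunn--Minkowski only through the concavity of $r\mapsto|\Omega_r|^{1/n}$: the tangent line at $r=0$ gives $|\Omega_r|\leq|\Omega|\bigl(1-\tfrac{r}{n}\tfrac{|\partial\Omega|}{|\Omega|}\bigr)^n$, this bound is maximized in $r$ to produce $\tfrac{n^{n+1}}{(n+1)^{n+1}}\,|\Omega|^2/|\partial\Omega|$, and the classical isoperimetric inequality is then invoked to compare $|\Omega|^2/|\partial\Omega|$ with its value on the ball, the equality case being inherited from the isoperimetric inequality. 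You instead apply the additive Brunn--Minkowski inequality to the inclusion $\Omega_r+B_r\subseteq\Omega$, obtaining $|\Omega_r|\leq\kappa_n(R-r)^n$ directly; this removes $|\partial\Omega|$ and the separate isoperimetric step from the argument, at the price of a pointwise weaker bound on $|\Omega_r|$ (weaker exactly by the isoperimetric inequality $|\partial\Omega|/(n|\Omega|)\geq 1/R$), which nonetheless has the same maximum over $r$ and therefore still closes the proof. Your equality discussion is also sound in outline: saturation forces equality in Brunn--Minkowski for the convex bodies $\Omega_{\r}$ and $B_{\r}$, both of positive volume, hence homothety, so $\Omega_{\r}$ is a ball; and $|\Omega\setminus(\Omega_{\r}+B_{\r})|=0$ forces $\Omega=\Omega_{\r}+B_{\r}$ because an open convex set cannot properly contain an open convex subset of the same volume. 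This is no harder than the paper's equality analysis, just a different rigidity statement.
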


\begin{remark}\label{r:chiarimento1}
As mentioned in the Introduction, a result analogous  to Theorem \ref{t:isoper} has been obtained in \cite{DaKa2010} for a variational Bernoulli constant 
related to the Alt--Caffarelli minimization problems for $p$-growth energies. 
We wish to point out that Theorem \ref{t:isoper} cannot be obtained simply by passing to the limit as $p \to + \infty$ in the isoperimetric inequality by Daners and Kawohl. 
After the discussion in Section  \ref{s:approx}, we will be in a position to
give  more  details in this respect (see 
Remark \ref{r:chiarimento2}).  \end{remark}

\bigskip 

We now turn our attention to the uniqueness of solutions to problem \ref{f:P}. To that aim, we introduce the  {\it singular radius of $\Omega$}, defined as  
\begin{equation}\label{f:Lsing}
r_{sing}:= \dist (\Sigma, \partial \Omega)\, , 
\end{equation}
where $\Sigma$ denotes the cut locus of $\Omega$ (i.e., the closure of the set of points where the distance from $\partial \Omega$ is not differentiable). 

Accordingly, since the map $\mathcal R: \L \mapsto r_\L$ (with $r _\L$ defined as in Theorem \ref{t:Linfty}) turns out to be monotone decreasing from $[\L ^*, + \infty)$ to $[r^* , 0)$ (see Lemma \ref{l:fl} below), the value
\begin{equation}\label{f:rsing}
\L_{sing}:= \mathcal R ^ {-1} (r_{sing})\, , 
\end{equation}
 is uniquely defined. 

We point out that the radius $r _{sing}$ may be smaller or larger than $r ^*$ according to the domain under consideration (and consequently $\L _{sing}$ may be larger or smaller than $\L _\infty = \L ^*$). 
For instance in dimension $n=2$,  if $\Omega = B _R$ (the ball of radius $R$), it holds
$$r_{sing} = R > r ^* = \frac{R}{3}\,;$$ 
on the other hand, if $\Omega= Q_a$  (the square of side $a$), we have
$$r_{sing} = 0 < r ^* = \frac{a}{6}\,.$$

We are now in a position to discuss the uniqueness of solutions to \ref{f:P}. 

\begin{theorem}[Uniqueness  threshold]\label{t:uniqP}
We have:
\begin{itemize}
\item[--] If $\L_{sing} \leq\L _\infty$,  \ref{f:P}  admits a unique solution (given by  $v_{r_\Lambda}$)  if and only if $\L > \L _\infty$. 

\item[--] If $\L_{sing} >\L _\infty$,  \ref{f:P}  admits a unique solution (given by  $v_{r_\Lambda}$) 
if and only if $\L \geq \L _{sing}$. 
\end{itemize} 
\end{theorem}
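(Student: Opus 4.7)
The plan is to reduce the uniqueness question to a purely geometric condition on the parallel sets of $\Omega$, via McShane's Lipschitz extension theorem.

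First, by Theorem \ref{t:Linfty} together with the trivial observation that any $u \in \llipst_1(\Omega)$ with $\{u>0\} = D_{r_\Lambda}$ and $\Lip{u} \leq 1/r_\Lambda$ fulfills $J_\Lambda(u) \leq 1/r_\Lambda + \Lambda |D_{r_\Lambda}| = m_\Lambda$, the set of non-constant solutions to $(P)_\Lambda$ coincides (for $\Lambda \geq \Lambda_\infty$) with
\[
\mathcal{M}_\Lambda := \bigl\{ u \in \llipst_1(\Omega) : \{u > 0\} = D_{r_\Lambda},\ \Lip{u} \leq 1/r_\Lambda \bigr\}.
\]
Since $\dist(\partial\Omega, \partial\Omega_{r_\Lambda}) = r_\Lambda$ in the convex setting, an element of $\mathcal M_\Lambda$ amounts to a $(1/r_\Lambda)$-Lipschitz extension to $\overline{D_{r_\Lambda}}$ of the boundary datum $f\equiv 1$ on $\partial\Omega$ and $f\equiv 0$ on $\partial\Omega_{r_\Lambda}$, continued by $0$ on $\overline{\Omega_{r_\Lambda}}$. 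By McShane's theorem, uniqueness of this extension is equivalent to the coincidence $u_{\min}\equiv u_{\max}$ of the two extremal ones; direct computation using the triangle inequality yields
\[
u_{\min}(x) = 1 - \frac{d(x)}{r_\Lambda} = v_{r_\Lambda}(x), \qquad u_{\max}(x) = \frac{\dist(x, \partial\Omega_{r_\Lambda})}{r_\Lambda}, \qquad x \in D_{r_\Lambda},
\]
where $d(x) := \dist(x,\partial\Omega)$. Hence $\mathcal M_\Lambda=\{v_{r_\Lambda}\}$ if and only if the geometric condition
\[
(\ast) \qquad \dist(x, \partial\Omega_{r_\Lambda}) = r_\Lambda - d(x) \qquad \forall x \in D_{r_\Lambda}
\]
is in force.

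Next I would prove that $(\ast)$ is equivalent to $r_\Lambda \leq r_{sing}$. The implication $r_\Lambda \leq r_{sing} \Rightarrow (\ast)$ is immediate: every $x \in D_{r_\Lambda}$ satisfies $d(x) < r_\Lambda \leq r_{sing}$, so $x \notin \Sigma$ and has a unique nearest point $y(x) \in \partial\Omega$; the inward-normal segment from $y(x)$ extends without meeting $\Sigma$ for total length $r_\Lambda$ and reaches a point $z \in \partial\Omega_{r_\Lambda}$ with $|x-z| = r_\Lambda - d(x)$. For the converse, from $r_\Lambda > r_{sing} = \min\{d(z):z\in\Sigma\}$ and the fact that $\Sigma$ is the closure of the multi-nearest-point set of $d$, I can select a point $p$ with at least two nearest points $y_1, y_2 \in \partial\Omega$ and $d(p) < r_\Lambda$. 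Setting $\nu_i := (p - y_i)/d(p)$ and $\theta := \angle(\nu_1, \nu_2) > 0$, a first-order expansion of $|p + t\nu - y_i|^2$ yields the one-sided directional derivative bound
\[
d'(p;\nu) \leq \min_{i=1,2} \langle \nu_i,\nu\rangle \leq \cos(\theta/2) < 1 \qquad \forall \nu \in S^{n-1},
\]
so no segment emanating from $p$ can transport the $1$-Lipschitz function $d$ at unit slope, in contradiction with $(\ast)$ at $p$.

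Finally, by the strict monotonicity of $\mathcal R:\Lambda\mapsto r_\Lambda$ in Lemma \ref{l:fl}, the condition $r_\Lambda \leq r_{sing}$ reads $\Lambda \geq \Lambda_{sing}$ when $r_{sing}\leq r^*$ (i.e.\ $\Lambda_{sing}\geq\Lambda_\infty$), while it is automatic on $(\Lambda_\infty, +\infty)$ when $r_{sing} > r^*$ (i.e.\ $\Lambda_{sing}<\Lambda_\infty$). Combined with the fact from Theorem \ref{t:Linfty} that the constant $u\equiv 1$ is a minimizer only for $\Lambda\leq\Lambda_\infty$, this yields exactly the dichotomy in the statement. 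The main obstacle I foresee is the strict directional-derivative bound at points of $\Sigma$: one has to secure a multi-nearest-point $p$ with $d(p) < r_\Lambda$ (which may require approximating $r_{sing}$ along a sequence of multi-nearest-point accumulations, should the minimum on $\Sigma$ be attained only at an ``endpoint'' of the cut locus with a unique nearest point), and to justify rigorously the first-order expansion of $d$ at such a $p$ despite the min-over-nearest-points structure.
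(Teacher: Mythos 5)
Your argument is correct, and for half of the theorem it takes a genuinely different route from the paper's. The paper also reduces everything to the geometric condition $r_\Lambda \leq r_{sing}$, and its uniqueness proof (unique projection $y$ of $x$, the point $z\in\partial\Omega_{r_\Lambda}$ on the inward normal, and the two Lipschitz contradictions \eqref{f:contra1}--\eqref{f:contra2}) is in substance the same squeeze between your $u_{\min}$ and $u_{\max}$ along the normal segment, i.e.\ your condition $(\ast)$; McShane is just a cleaner packaging of it. Where you truly diverge is the non-uniqueness direction: the paper produces a second solution by invoking Remark \ref{c:corharm} (the infinity-harmonic potential $w_{r_\Lambda}$ is also a minimizer) together with the Evans--Smart everywhere-differentiability theorem \cite{EvSm}, concluding $w_{r_\Lambda}\neq v_{r_\Lambda}$ because $v_{r_\Lambda}$ is not differentiable at cut points inside $D_{r_\Lambda}$; you instead exhibit the McShane maximal extension as a second minimizer, detecting $u_{\max}\neq u_{\min}$ via the elementary one-sided bound $d'(p;\nu)\leq\cos(\theta/2)<1$ at a multi-projection point $p$ with $\dist(p,\partial\Omega)<r_\Lambda$. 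The obstacle you flag is easily dispatched: since $\Sigma$ is the closure of the multi-projection set and $\dist(\cdot,\partial\Omega)$ is continuous, any $z\in\Sigma$ with $\dist(z,\partial\Omega)<r_\Lambda$ is approximated by genuine multi-projection points with the same strict inequality, and at such a point the expansion $\dist(p+t\nu,\partial\Omega)\leq|p+t\nu-y_i|$ needs no regularity of $d$ at all. Your approach buys complete elementarity (no PDE input), at the price of one small slip: on $D_{r_\Lambda}$ the maximal extension is $u_{\max}(x)=\min\bigl\{1+\dist(x,\partial\Omega)/r_\Lambda,\ \dist(x,\overline{\Omega_{r_\Lambda}})/r_\Lambda\bigr\}$, and near a corner of, say, a thin rectangle the first term is the smaller one (so $u_{\max}>1$ there); since however $1+\dist(x,\partial\Omega)/r_\Lambda$ always strictly exceeds $u_{\min}(x)=1-\dist(x,\partial\Omega)/r_\Lambda$ on $D_{r_\Lambda}$, the equivalence between $u_{\max}\equiv u_{\min}$ and $(\ast)$ survives unchanged and nothing downstream is affected.
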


\bigskip
\subsection {Relationship with the $\infty$-Bernoulli problem}\label{s:Bernoulli}
We now examine the link between solutions to \ref{f:P} and solutions to \eqref{f:Pinfty}. 
A first glance in this direction has been already given in Remark  \ref{c:corharm}. 
A more precise answer to the 
questions $Q_1$ and $Q_2$  stated in the Introduction is contained in the next two statements. 

\begin{proposition}[Solutions to  \ref{f:P} versus solutions to  \eqref{f:Pinfty}]\label{p:PP} 

Let $\L \geq \L _ \infty$. 

 Among solutions to problem \ref{f:P}, there is exactly one which solves \eqref{f:Pinfty}  (for  $\l =   \frac{1}{r_\L}$): 
it is given by the infinity harmonic potential $w_{r_\L}$ of $D_{r _\Lambda}$.  

\smallskip
 
In particular, when \ref{f:P} admits a unique solution, this one solves \eqref{f:Pinfty}  (for  $\l =   \frac{1}{r_\L}$), 
and it is given by $w_{r_\L} = v _{r_\L}$.  
\end{proposition}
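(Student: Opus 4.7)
The plan is to combine the structural information on solutions of \ref{f:P} from Theorem \ref{t:Linfty} and Remark \ref{c:corharm} with the uniqueness of viscosity solutions of the infinity Laplacian. Concretely, I would first establish that the candidate $w_{r_\L}$ does solve \eqref{f:Pinfty} with $\l=1/r_\L$, and then show that any other solution of \ref{f:P} that also solves \eqref{f:Pinfty} is forced into the same Dirichlet problem whose unique viscosity solution is $w_{r_\L}$.

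\textbf{Step 1 (existence).} By Remark \ref{c:corharm}, $w_{r_\L}$, extended by zero to $\Omega_{r_\L}$, is a solution of \ref{f:P}, so by \eqref{f:split} it has positivity set $D_{r_\L}$ and global Lipschitz constant $1/r_\L$. The equation $\Dinf w_{r_\L}=0$ in $D_{r_\L}$ and the datum $w_{r_\L}=1$ on $\partial\Omega$ are already encoded in the definition \eqref{f:pot}, so the only substantive point is the free boundary condition $|\nabla w_{r_\L}|=1/r_\L$ on $\partial\Omega_{r_\L}$ in the viscosity sense. I would split this into the two viscosity inequalities: the upper one follows immediately from the bound $\Lip{w_{r_\L}}=1/r_\L$; for the lower one, I would use that each $x_0\in\partial\Omega_{r_\L}$ has a nearest point $y\in\partial\Omega$ with $|x_0-y|=r_\L$, so that the jump $w_{r_\L}(y)-w_{r_\L}(x_0)=1$ is realised over a segment of length $r_\L$, and then invoke cone comparison for infinity harmonic functions on parallel shells (as developed in the companion paper \cite{CF8}) to get slope $1/r_\L$ from below at $x_0$.

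\textbf{Step 2 (uniqueness).} Let $v$ be any solution of \ref{f:P} which also solves \eqref{f:Pinfty} for some $\l$. Since $\L\geq\L_\infty$, Theorem \ref{t:Linfty} forces $v$ to be non-constant with $\{v>0\}=D_{r_\L}$ and $\Lip{v}=1/r_\L$. The first two conditions in \eqref{f:Pinfty} then say that $v$ is infinity harmonic on $D_{r_\L}$ and equals $1$ on $\partial\Omega$; by continuity, $v$ vanishes on $\partial\Omega_{r_\L}$. Uniqueness of the viscosity Dirichlet problem for $\Dinf=0$ (Jensen \cite{Jen}) gives $v\equiv w_{r_\L}$ on $D_{r_\L}$, and both vanish outside, so $v=w_{r_\L}$. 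Reading off the third condition in \eqref{f:Pinfty} then identifies the Bernoulli constant as $\l=1/r_\L$. The concluding \emph{in particular} clause is obtained by combining this with Theorem \ref{t:uniqP}: when \ref{f:P} has a unique solution, it must simultaneously be $w_{r_\L}$ (by the first part) and $v_{r_\L}$ (by Theorem \ref{t:uniqP}), hence $w_{r_\L}=v_{r_\L}$.

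\textbf{Main obstacle.} The delicate step is the lower viscosity inequality for $|\nabla w_{r_\L}|$ on the free boundary in Step 1: the optimality argument of Remark \ref{c:corharm} pins down the global Lipschitz constant of $w_{r_\L}$ from above, but not the pointwise gradient on $\partial\Omega_{r_\L}$ from below. Closing this gap requires the geometric cone-comparison analysis of infinity harmonic potentials of convex parallel shells carried out in \cite{CF8}, which is what allows the transfer of the variational information \eqref{f:split} into the overdetermined boundary condition of \eqref{f:Pinfty}.
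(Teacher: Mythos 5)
Your proposal is correct and follows essentially the same route as the paper: existence of the candidate via Remark \ref{c:corharm}, uniqueness via the forced positivity set \eqref{f:split} plus infinity harmonicity and the uniqueness of the Dirichlet problem, and the final clause from the fact that $v_{r_\L}$ and $w_{r_\L}$ are both minimizers. The only difference is that where you sketch the free-boundary condition $|\nabla w_{r_\L}|=1/r_\L$ by cone comparison, the paper simply invokes Theorem \ref{t:B} from \cite{CF8}, which already packages exactly that statement (including the estimates \eqref{f:cfdist}), so your ``main obstacle'' is in fact already resolved by the quoted result.
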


\begin{proposition}[Solutions to  \eqref{f:Pinfty} versus solutions to  \ref{f:P}]\label{p:varsol}  
Let $\l \geq R _\Omega ^ {-1}$. 

Among solutions to problem \eqref{f:Pinfty}, there is one which solves \ref{f:P} 
if and only if
\begin{equation}\label{f:cond1}
\lambda\geq \frac{1}{r^*}\ .
\end{equation}
In this case such solution is given by $w_{r_\L}$, with $r_\Lambda  = \frac{1}{\lambda}$, 
and agrees with $v _{r_\L}$ if and only if  
\[
\lambda\geq \max \Big \{ \frac{1}{r^*}, \frac{1}{r_{sing}}\Big \}  \,.
\]
 \end{proposition}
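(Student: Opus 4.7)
The plan is to match non-constant solutions of the two problems via the infinity-harmonic potential $w_{r_\Lambda}$, leveraging Theorem~\ref{t:Linfty} and the previously established Proposition~\ref{p:PP}. Recall that the map $\mathcal{R}:\Lambda\mapsto r_\Lambda$ is a decreasing bijection from $[\Lambda^*,+\infty)$ onto $(0,r^*]$ (Lemma~\ref{l:fl}), and that the hypothesis $\lambda\geq R_\Omega^{-1}$ ensures existence of non-constant solutions to \eqref{f:Pinfty} via \cite{CF8}. For the \emph{if} direction of the first equivalence, given $\lambda\geq 1/r^*$ we set $r:=1/\lambda\in(0,r^*]$ and pick the unique $\Lambda\geq\Lambda^*$ with $r_\Lambda=r$; then $w_{r_\Lambda}$ solves \ref{f:P} by Remark~\ref{c:corharm} and simultaneously solves \eqref{f:Pinfty} with parameter $1/r_\Lambda=\lambda$ by Proposition~\ref{p:PP}.

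Conversely, let $u$ be a non-constant solution of \eqref{f:Pinfty} that also solves \ref{f:P} for some $\Lambda$. Theorem~\ref{t:Linfty} then forces $\Lambda\geq\Lambda^*$, $\{u>0\}=D_{r_\Lambda}$, and $\mathrm{Lip}(u)=1/r_\Lambda$ with $r_\Lambda\leq r^*$. The infinity-harmonicity of $u$ on $D_{r_\Lambda}$, combined with $u=1$ on $\partial\Omega$ and $u=0$ on $\partial\Omega_{r_\Lambda}$, forces $u=w_{r_\Lambda}$ by uniqueness of the infinity-harmonic extension; Proposition~\ref{p:PP} then identifies the Bernoulli parameter as $\lambda=1/r_\Lambda\geq 1/r^*$.

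Finally, once $w_{r_\Lambda}$ (with $r_\Lambda=1/\lambda$) is pinned down as the common solution, its equality with $v_{r_\Lambda}$ reduces, again via uniqueness of the infinity-harmonic extension, to the question of whether $v_{r_\Lambda}$ is itself infinity-harmonic on $D_{r_\Lambda}$. Since $v_{r_\Lambda}=1-d(\cdot)/r_\Lambda$ has constant gradient modulus $1/r_\Lambda$ wherever $d$ is differentiable, a direct computation gives $\Delta_\infty v_{r_\Lambda}=0$ pointwise off the cut locus $\Sigma$. At any point $x_0\in\Sigma\cap D_{r_\Lambda}$, on the other hand, the multi-valued superdifferential of the concave distance $d$ allows one to build a smooth function touching $v_{r_\Lambda}$ from below at $x_0$ with strictly positive infinity-Laplacian, defeating the viscosity supersolution condition. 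Hence the equality holds iff $\Sigma\cap D_{r_\Lambda}=\emptyset$, i.e., $r_\Lambda\leq r_{sing}$, i.e., $\lambda\geq 1/r_{sing}$; combined with the previously obtained $\lambda\geq 1/r^*$ this gives $\lambda\geq\max\{1/r^*,1/r_{sing}\}$. The main technical obstacle is exactly this viscosity test-function construction at cut points, where one must exploit the structure of the superdifferential of $d$ in the convex setting to produce the desired strict inequality $\Delta_\infty\phi(x_0)>0$.
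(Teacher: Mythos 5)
Your first equivalence is handled essentially as in the paper: both arguments reduce it to Proposition~\ref{p:PP} together with the fact that $\mathcal R:\Lambda\mapsto r_\Lambda$ is a decreasing bijection of $[\Lambda^*,+\infty)$ onto $(0,r^*]$, so that $\Lambda\geq\Lambda_\infty$ translates into $\lambda=1/r_\Lambda\geq 1/r^*$. For the second claim, however, you take a genuinely different route. The paper gets $w_{r_\Lambda}=v_{r_\Lambda}\iff r_\Lambda\leq r_{sing}$ for free from Theorem~\ref{t:B}: the estimates \eqref{f:cfdist} say that $w_{1/\lambda}$ coincides with $1-\lambda\,\dist(\cdot,\partial\Omega)$ exactly on $\widehat D_{1/\lambda}$, and $\widehat D_r=D_r$ if and only if $r\leq r_{sing}$. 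You instead reduce the question to whether $v_{r_\Lambda}$ is infinity harmonic in $D_{r_\Lambda}$ and propose to settle it by hand. That reduction (via uniqueness of the infinity-harmonic extension) is sound, but the execution has two weak points.

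First, the assertion that ``a direct computation gives $\Delta_\infty v_{r_\Lambda}=0$ pointwise off the cut locus'' is not literally meaningful for a general convex $\Omega$: the distance function need not be $C^2$ away from $\Sigma$ when $\partial\Omega$ is only $C^1$ (or has corners), so infinity harmonicity there must be argued in the viscosity sense (e.g.\ by comparison with cones along the transport rays), or simply imported from Theorem~\ref{t:B}. Second, and more seriously, the test-function construction at a cut point $x_0$ --- which you correctly identify as the technical core and leave unproved --- has a genuine pitfall: the subdifferential of the convex function $v_{r_\Lambda}$ at $x_0$ is $-\tfrac{1}{r_\Lambda}\conv\{\nu_1,\dots\}$ with all $\nu_i$ unit vectors, so all its extreme points have the \emph{same norm}. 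Consequently the natural choice (slope at the barycenter plus an isotropic quadratic) yields $\Delta_\infty\phi(x_0)=A\,Mp\cdot p=0$, because $p$ is orthogonal to the span of the differences $\nu_i-\nu_j$. One must instead pick the touching slope $p$ asymmetrically in the relative interior (e.g.\ $p=-\tfrac{1}{r_\Lambda}(t\nu_1+(1-t)\nu_2)$ with $t\neq\tfrac12$) and a rank-one quadratic along $\nu_1-\nu_2$; then the linear gap $\sup_i\nu_i\cdot h-p\cdot h\geq c\,|(\nu_1-\nu_2)\cdot h|$ dominates the quadratic perturbation and $\Delta_\infty\phi(x_0)=A\,(p\cdot(\nu_1-\nu_2))^2>0$. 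Alternatively, this whole direction follows in one line from the Evans--Smart everywhere-differentiability of infinity harmonic functions \cite{EvSm}, exactly as the paper does in the proof of Theorem~\ref{t:uniqP}: if $r_\Lambda>r_{sing}$ then $v_{r_\Lambda}$ is not differentiable at some point of $D_{r_\Lambda}$ while $w_{r_\Lambda}$ is, so they cannot coincide. As written, your proposal is a correct strategy with the decisive step only sketched; either the refined construction above or the appeal to Theorem~\ref{t:B} (or to \cite{EvSm}) is needed to close it.
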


\subsection {Approximation by minima of $p$-energies}\label{s:approx}
We now show that problem \ref{f:P}
arises through an asymptotic analysis as $p \to + \infty$
of problems of the type \eqref{f:VPp}.

Let $\Lambda  \geq \Lambda _\infty$ be fixed.  
For every $\lambda>0$ and $p>1$,  let us
consider  the minimization problem 
\begin{equation}\label{f:minp}
\min\left\{\JLpa (u):\ u\in W_1^{1,p}(\Omega)\right\}\,,
\end{equation}
where  the functionals  $\JLpa$ are defined by 
$$ \JLpa (u) :=
\frac{1}{p}\int_\Omega \left(\frac{|\nabla u|}{\lambda}\right)^p\, dx + \lambda +
\frac{p-1}{p} \Lambda |\{u>0\}|,
\qquad\forall u \in W_1^{1,p}(\Omega) := 1 + W ^ {1,p}_0 (\Omega)\,.
$$
Incidentally, let us  recall from \cite{DanPet} that a  solution to problem \eqref{f:minp}
exists,  is non negative in $\Omega$ and satisfies the overdetermined boundary value problem   \eqref{f:Pp} with $c = \lambda \Lambda^{1/p}$ 
(provided the Neumann boundary condition on the free boundary is intended in a suitable weak sense).

If, for a given $\lambda >0$, we consider 
a sequence of solutions  $(u^{p_j, \lambda})_j$  to problem \eqref{f:minp} for 
$p = p _j \to + \infty$, it is not difficult to see that, 
up to passing to a (not relabeled) subsequence, the functions 
$u^{p_j, \lambda}$ converge 
uniformly in $\overline{\Omega}$  to a function $ u^\lambda$
which is infinity harmonic in its positivity set and solves the variational problem
\begin{equation}\label{f:mina} 
\min \Big \{ \JLia (u)  \ :\ 
u\in\llipst_1(\Omega)
 \Big \} \,,
\end{equation}

the functionals $J_{\Lambda}^ \lambda$ being defined 
on $\llipst_1(\Omega)$ by 
\begin{equation}\label{f:double}
\JLia(u) :=
\begin{cases}
\lambda + \Lambda  |\{u>0\}|& \text{  if}\ \Lip{u} \leq \lambda \\
\smallskip
+\infty &\text{   otherwise}.
\end{cases}
\end{equation}
The proof of this fact, which will be detailed in Section \ref{sec:proof3} (see Lemma \ref{l:conv}), is similar to the one given by Kawohl and Shahgholian in the paper \cite{KawSha}, where they deal with the asymptotics  as $p  \to + \infty$ of exterior $p$-Bernoulli problems.  (Indeed, by computing a $\Gamma$-limit, they arrive precisely at a minimum problem with gradient constraint analogue to \eqref{f:mina}, in which they fix $\Lambda = 1$.)

Now we observe that the functionals $\JLia$  are related to $J _\L$ by the equality
\begin{equation}\label{f:ce}
J_\Lambda(u) = \inf_{\lambda > 0} \JLia (u),
\qquad\forall u \in \llipst_1(\Omega) 
\end{equation}
(and actually the above infimum can be equivalently taken over $\lambda \geq 1/\inradius$, since for every $u\in \llipst_1(\Omega)$ with $\Lip{u} < 1/\inradius$ it holds
$|\{ u > 0 \}| = |\Omega|$).
 
The validity of \eqref{f:ce} is precisely the reason why we put the constant addendum $\lambda$ into the expression of the functionals $\JLpa$: it can be interpreted as a sort of Lagrange multiplier for the gradient constraint  $\Lip{u} \leq \lambda$ which appears when passing to the limit  at fixed $\lambda$.

In the light of \eqref{f:ce}, it is now natural to guess how the value of $m _\L$ can be obtained from $\JLpa$ in the limit as $p \to + \infty$:  one has to take a double infimum, over $u\in W ^ {1, p} _1 (\Omega)$ and over $\lambda \geq 1/ \inradius$. 

\begin{theorem}[$p$-approximation]\label{t:convmin}

Let $\Lambda\geq\L_\infty$. Then
\begin{equation}\label{f:min}
\lim_{p\to +\infty}
\inf_{\lambda \geq 1/\inradius}
\inf_{u\in W_1^{1,p}(\Omega)} \JLpa(u)
= m _\Lambda \,.
\end{equation}
\end{theorem}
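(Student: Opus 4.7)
The plan is to prove that
$$M_p:=\inf_{\lambda\geq 1/\inradius}\inf_{u\in W_1^{1,p}(\Omega)}\JLpa(u)$$
satisfies both $\limsup_p M_p\leq m_\Lambda$ and $\liminf_p M_p\geq m_\Lambda$. The upper bound will be obtained by testing the double infimum on the explicit competitor $v_{r_\Lambda}$. The lower bound will come from compactness of an almost-minimizing sequence $(\lambda_p,u_p)$, combined with the rewriting
$$\inf_{\lambda\geq 1/\inradius}\inf_u \JLia(u)=m_\Lambda$$
provided by \eqref{f:ce}.

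For the upper bound I would pick any $\lambda>1/r_\Lambda$ (this is admissible since $r_\Lambda<\inradius$). Since $v_{r_\Lambda}\in W_1^{1,\infty}(\Omega)\subset W_1^{1,p}(\Omega)$ has $|\nabla v_{r_\Lambda}|\equiv 1/r_\Lambda$ on $D_{r_\Lambda}$ and vanishes elsewhere, a direct computation gives
$$\JLpa(v_{r_\Lambda})=\frac{|D_{r_\Lambda}|}{p\,(r_\Lambda\lambda)^p}+\lambda+\frac{p-1}{p}\Lambda|D_{r_\Lambda}|.$$
The first summand tends to $0$ as $p\to\infty$ because $r_\Lambda\lambda>1$, whence $\limsup_p M_p\leq\lambda+\Lambda|D_{r_\Lambda}|$; letting $\lambda\downarrow 1/r_\Lambda$ and invoking the identification $m_\Lambda=1/r_\Lambda+\Lambda|D_{r_\Lambda}|$ from Theorem~\ref{t:Linfty} closes this half.

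For the lower bound I would select, for each $p$, an almost-optimal pair $(\lambda_p,u_p)$ with $\lambda_p\geq 1/\inradius$ and $u_p$ a minimizer of $J_\Lambda^{p,\lambda_p}$ (existence from \cite{DanPet}), satisfying $\JLpa(u_p)\leq M_p+1/p$. Since $\JLpa(u)\geq\lambda$ by construction, the upper bound already proved confines $\lambda_p\in[1/\inradius,\,m_\Lambda+1]$ for $p$ large, so along a subsequence $\lambda_p\to\lambda_\infty$. The energy estimate yields $\|\nabla u_p\|_{L^p}\leq\lambda_p\,\bigl(p(m_\Lambda+2)\bigr)^{1/p}$, and via H\"older one gets, for every fixed $q<\infty$, a uniform bound on $\|\nabla u_p\|_{L^q}$; a diagonal extraction through the compact embedding $W^{1,q}\hookrightarrow C^0$ (valid for $q>n$) produces a subsequence with $u_p\to u^\infty$ uniformly on $\overline\Omega$, with $u^\infty\in\llipst_1(\Omega)$. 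Lower semicontinuity of $\|\cdot\|_{L^q}$ along the weak limit, followed by $q\to\infty$, gives $\Lip{u^\infty}\leq\lambda_\infty$; while uniform convergence of nonnegative functions forces $\{u^\infty>0\}\subseteq\liminf_p\{u_p>0\}$, so Fatou yields $|\{u^\infty>0\}|\leq\liminf_p|\{u_p>0\}|$. Combining these ingredients,
$$J_\Lambda^{\lambda_\infty}(u^\infty)=\lambda_\infty+\Lambda|\{u^\infty>0\}|\leq\liminf_p\Big(\lambda_p+\tfrac{p-1}{p}\Lambda|\{u_p>0\}|\Big)\leq\liminf_p M_p,$$
and the left-hand side is at least $m_\Lambda$ by \eqref{f:ce}.

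The main obstacle is the simultaneous passage to the limit in the diverging exponent $p$ and in the Lagrange parameter $\lambda_p$: the sharp gradient bound $\Lip{u^\infty}\leq\lambda_\infty$ and the volume semicontinuity must both be recovered along the same subsequence, whereas $\lambda_p$ is only known a priori to be bounded. The decisive observation is that the $+\lambda$ summand built into $\JLpa$, which at first sight looks artificial, acts precisely as a Lagrange multiplier for the gradient constraint and is what keeps $\lambda_p$ bounded as $p\to\infty$; without that term the proof of the lower bound would break down.
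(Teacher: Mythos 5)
Your argument is correct, and while the upper bound coincides with the paper's (both test on $v_{1/\lambda}$--type competitors and use $m_\Lambda = 1/r_\Lambda + \Lambda|D_{r_\Lambda}|$ from Theorem \ref{t:Linfty}), your lower bound takes a genuinely different and more direct route. The paper first proves Lemma \ref{l:conv} on the convergence, at each \emph{fixed} $\lambda$, of minimizers $u^{p_j,\lambda}$ to a minimizer $u^\lambda$ of $\JLia$, then reduces $\lambda$ to a compact interval, enumerates a dense set of parameters $\lambda_k$, performs a double diagonal extraction in $(p_j,\lambda_{k_j})$, and needs a separate claim comparing $|D_{1/\lambda_{k_j}}|$ with $|\{u^{p_j,\lambda_{k_j}}>0\}|$ through the limit parameter $\overline{\lambda}$. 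You instead apply the direct method to the almost-optimal diagonal sequence $(\lambda_p,u_p)$ itself: boundedness of $\lambda_p$ (coming precisely from the built-in $+\lambda$ summand, as you observe), the $L^q$-interpolation bound giving $\Lip{u^\infty}\le\lambda_\infty$, Fatou's lemma for the positivity sets, and the identity \eqref{f:ce} to conclude via $J_\Lambda^{\lambda_\infty}(u^\infty)\ge J_\Lambda(u^\infty)\ge m_\Lambda$. This bypasses Lemma \ref{l:conv} entirely and is shorter for the theorem as stated; what it does not deliver is the by-product of Lemma \ref{l:conv} (that the fixed-$\lambda$ limits are infinity harmonic in their positivity set with $\{u^\lambda>0\}=D_{1/\lambda}\cup\partial\Omega$), which the paper also wants for the discussion in Section \ref{s:approx}. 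Two small points to make explicit in a full write-up: the limit $u^\infty$ belongs to $\llipst_1(\Omega)$ (nonnegativity is inherited from the minimizers of \eqref{f:minp}, which are nonnegative by \cite{DanPet}, and the boundary condition passes to the limit by uniform convergence together with $u_p-1\in W^{1,p}_0(\Omega)$); and the existence of the full limit in \eqref{f:min}, not merely of $\liminf$ and $\limsup$, is automatic once both one-sided bounds equal $m_\Lambda$, so you do not need the monotonicity of $p\mapsto\JLpa(u)$ from Remark \ref{r:mon} that the paper invokes for this purpose.
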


\begin{remark}\label{r:chiarimento2}
The
variational $p$-Bernoulli constant $\Lambda_{p}$ considered by Kawohl and Daners in \cite{DaKa2010} 
agrees with the infimum of positive $\lambda$ such that problem \eqref{f:minp} (with $\Lambda = 1$) admits a non-constant solution. 
In the limit as $p \to + \infty$, $\Lambda _p$ 
does not converge to $\Lambda _\infty$ (in fact, in \cite{CF8} we proved that  $\lim _{p \to + \infty} \Lambda _p = 1/ \inradius$), 
and this is precisely the reason why, as mentioned in Remark \ref{r:chiarimento1}, 
Theorem \ref{t:isoper} cannot be obtained by passing to the limit as $p \to + \infty$ in the isoperimetric inequality proved by Daners-Kawohl in \cite{DaKa2010}. 
In view of Theorem \ref{t:convmin}, one should  not be surprised by the missed convergence of $\Lambda_p$ to $\Lambda _\infty$. Indeed,  \eqref{f:min} suggests that, 
in order to find a $p$-approximation of $\Lambda _\infty$, one should consider rather the constants
$\widetilde \Lambda _p$ defined as the infimum of positive $\Lambda$ such that problem 
\[
\inf_{u\in W_1^{1,p}(\Omega)}    \inf_{\lambda \geq 1/\inradius}
\JLpa(u)
\]  
admits a non-constant solution.  Indeed a straightforward formal computation shows that
\[
\inf_{\lambda \geq 1/\inradius}
\JLpa(u) = \frac{p+1}{p} \| \nabla u\| _{L ^p(\Omega)} ^ {p/(p+1)} + \frac{p-1}{p} \Lambda |\{ u > 0 \} | \to J _\Lambda (u) \qquad \text{ as } p \to + \infty\,.
\] 
\end{remark}

\section{Proof of the results in Section \ref{s:analysis}}\label{sec:proof1}

To start, we establish a simple existence result:

\begin{lemma}[Existence of solutions to \ref{f:P}]\label{l:existence}
Problem \ref{f:P} admits a solution for every $\Lambda > 0$.
\end{lemma}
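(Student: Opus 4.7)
I would argue by the direct method, exploiting uniform convergence of minimizing sequences.

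The plan is to set up the direct method as follows. Since the constant function $u \equiv 1$ lies in $\llipst_1(\Omega)$ with $J_\Lambda(1) = \Lambda |\Omega|$, we have $m_\Lambda \leq \Lambda|\Omega|$. Pick a minimizing sequence $(u_n) \subset \llipst_1(\Omega)$ with $J_\Lambda(u_n) \to m_\Lambda$. From the bound $\Lip{u_n} \le J_\Lambda(u_n) \leq \Lambda |\Omega| + 1$ (eventually), together with the boundary condition $u_n = 1$ on $\partial \Omega$, the sequence $(u_n)$ is uniformly bounded and uniformly Lipschitz on $\overline{\Omega}$. By Arzelà--Ascoli, up to a subsequence $u_n \to u^*$ uniformly on $\overline{\Omega}$, for some $u^* \in W^{1,\infty}(\Omega)$.

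Next I would verify that $u^* \in \llipst_1(\Omega)$: the conditions $u^* \geq 0$ in $\Omega$ and $u^* = 1$ on $\partial \Omega$ pass to the limit under uniform convergence. It remains to show $J_\Lambda(u^*) \leq m_\Lambda$, which follows once both terms of $J_\Lambda$ are proved to be lower semicontinuous along $(u_n)$. For the Lipschitz seminorm this is standard: if $u_n \to u^*$ uniformly and $\Lip{u_n} \leq L_n$, then for every $x,y \in \Omega$ one has $|u^*(x) - u^*(y)| \leq (\liminf_n L_n)\, |x-y|$, so $\Lip{u^*} \leq \liminf_n \Lip{u_n}$.

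The delicate term is the measure term, and this is the one step that needs a brief argument. The point I would rely on is that uniform convergence forces the inclusion
\[
\{u^* > 0\} \subseteq \liminf_{n} \{u_n > 0\},
\]
because if $u^*(x) > 0$ and $\|u_n - u^*\|_\infty \to 0$ then $u_n(x) > 0$ for all but finitely many $n$. By Fatou's lemma applied to the characteristic functions $\chi_{\{u_n > 0\}}$, one obtains
\[
|\{u^* > 0\}| \leq \bigl|\liminf_{n} \{u_n > 0\}\bigr| \leq \liminf_n |\{u_n > 0\}|.
\]
Adding the two semicontinuity inequalities and using the superadditivity of $\liminf$ yields $J_\Lambda(u^*) \leq \liminf_n J_\Lambda(u_n) = m_\Lambda$, so $u^*$ is a minimizer. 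The main (minor) subtlety is thus the lower semicontinuity of the volume $|\{u>0\}|$, which would generally fail for weak convergences but holds here because uniform convergence yields pointwise strict-inequality passage.
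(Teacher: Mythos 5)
Your proposal is correct and follows essentially the same route as the paper's own proof: the direct method with Arzel\`a--Ascoli compactness from the equi-Lipschitz bound, lower semicontinuity of the Lipschitz seminorm by passing to the limit in the pointwise Lipschitz estimate, and lower semicontinuity of the measure term via the pointwise inequality $\chi_{\{u^*>0\}}\leq\liminf_n\chi_{\{u_n>0\}}$ combined with Fatou's lemma. No substantive differences.
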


\begin{proof}
Let $\Lambda > 0$ and let
$(u_j)\subset\llipst_1(\Omega)$ be a minimizing sequence for $J_\Lambda$.
Clearly, it is not restrictive to assume that
$J_\Lambda(u_j) \leq J_\Lambda(1) = \Lambda |\Omega|$,
so that $\| \nabla u_j \| _\infty \leq \Lambda |\Omega|$ for every $n$.
Hence, the sequence $(u_j)$ is equi-Lipschitz. 
Since $u_j = 1$ on $\partial\Omega$ for every $n$,
by Ascoli--Arzel\'a's theorem we deduce that there exist a subsequence (non relabeled)
and a function $u\in\llipst_1(\Omega)$ such that $u_j\to u$ uniformly in $\overline{\Omega}$.

Since
\[
\forall j\in\N:\quad
|u_j(x) - u_j(y)| \leq \Lip{u_j}\, |x-y|
\qquad \forall x,y\in\overline{\Omega},
\] 
passing to the limit we get
\[
|u(x) - u(y)| \leq \left(\liminf_{n\to+\infty}\Lip{u_j}\right)\, |x-y|
\qquad \forall x,y\in\overline{\Omega},
\]
so that $\Lip{u} \leq \liminf_j \Lip{u_j}$.

Moreover, since $u_j(x) \to u(x)$ for every $x\in\overline{\Omega}$,
it is easy to verify that
\[
\chi_{\{u > 0\}} (x) \leq
\liminf_{j\to +\infty}
\chi_{\{u_j > 0\}} (x),
\]
hence, by Fatou's Lemma, we deduce that
\[
|\{u > 0\}|
= \int_\Omega \chi_{\{u > 0\}}
\leq
\liminf_{j\to +\infty}
\int_\Omega \chi_{\{u_j > 0\}}
= \liminf_{j\to +\infty} |\{u_j > 0\}|.
\]
In conclusion, $J_\Lambda$ is lower semicontinuous with respect to the
uniform convergence, hence $u$ is a minimum point for $J_\Lambda$.
\end{proof}

We establish now the first part in the statement of Theorem \ref{t:Linfty}:

\begin{lemma}\label{l:rinfty} 
There exists a unique value $\r$ in the interval $(0, R _\Omega)$ satisfying equality
\eqref{f:r*}. Moreover, 
the function $r \mapsto r |\Omega_r|$ 
attains its unique maximum on $[0, \inradius]$ precisely at $\r$: 
$$\max _{[0, \inradius]} \big ( r |\Omega_r| \big ) = \r |\Omega _{\r}|\,.$$

\end{lemma}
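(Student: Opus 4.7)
The plan is to identify $\r$ as the unique interior maximizer of the function $g(r) := r \lvert \Omega_r \rvert$ on $[0, \inradius]$, and to read off \eqref{f:r*} as the first-order optimality condition. Throughout, the convexity of $\Omega$ enters through Brunn--Minkowski applied to the inner parallel bodies $\Omega_r$.

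First I would record the behavior of $g$ at the endpoints: $g$ is continuous on $[0, \inradius]$ with $g(0)=0$ and $g(\inradius)=0$ (since $\Omega_{\inradius}$ is lower-dimensional), while $g(r)>0$ on $(0,\inradius)$. Hence the maximum is attained at some interior point. To upgrade existence to uniqueness, I would show that $\log g$ is \emph{strictly} concave on $(0,\inradius)$. Using that $\Omega$ is convex, for $\alpha\in(0,1)$ one has the inclusion $\alpha\,\Omega_{r_1}+(1-\alpha)\,\Omega_{r_2}\subseteq\Omega_{\alpha r_1+(1-\alpha)r_2}$, and Brunn--Minkowski then gives concavity of $r\mapsto \lvert\Omega_r\rvert^{1/n}$ on $[0,\inradius]$. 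Composing this concave positive function with the concave increasing logarithm yields concavity of $r\mapsto\log\lvert\Omega_r\rvert$ on $(0,\inradius)$. Adding the strictly concave function $\log r$ makes $\log g$ strictly concave. Combined with $\log g(r)\to-\infty$ at both endpoints, this forces a unique maximizer $\r\in(0,\inradius)$.

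Next I would verify that the optimality condition at $\r$ coincides with \eqref{f:r*}. Applying the coarea formula to the distance function $d(x)=\dist(x,\partial\Omega)$, which satisfies $\lvert\nabla d\rvert=1$ a.e.\ in $\Omega$, one obtains
\[
\lvert\Omega_r\rvert \;=\; \int_r^{\inradius} \lvert\partial\Omega_s\rvert\,ds.
\]
For convex $\Omega$, the inner parallel set $\Omega_s$ is a convex body with non-empty interior (it contains a ball of radius $\inradius-s$) for every $s<\inradius$ and varies continuously in Hausdorff distance with $s$, so $s\mapsto\lvert\partial\Omega_s\rvert$ is continuous. Thus $g\in C^1(0,\inradius)$ with $g'(r)=\lvert\Omega_r\rvert-r\lvert\partial\Omega_r\rvert$, and the condition $g'(\r)=0$ is precisely \eqref{f:r*}. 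Because $g>0$ on $(0,\inradius)$, critical points of $g$ and $\log g$ coincide; the strict concavity of $\log g$ then ensures that the critical point is unique, proving both that \eqref{f:r*} has a unique solution in $(0,\inradius)$ and that this solution is the unique maximizer of $r\lvert\Omega_r\rvert$.

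The main delicate point I expect is verifying the strict concavity along this chain: one needs to avoid assuming any strict form of Brunn--Minkowski, and this is handled by noticing that the strictness can be outsourced to the elementary factor $\log r$. The other technical ingredient, $C^1$-regularity of $r\mapsto\lvert\Omega_r\rvert$, is a standard consequence of the coarea formula together with continuity of perimeter under Hausdorff convergence of non-degenerate convex bodies, and can simply be quoted.
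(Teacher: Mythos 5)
Your proof is correct and follows essentially the same route as the paper: the key ingredient in both is the Brunn--Minkowski concavity of $r\mapsto|\Omega_r|^{1/n}$ (hence of $\log|\Omega_r|$), combined with the strictly decreasing factor $1/r$ — which you package as strict concavity of $\log\bigl(r|\Omega_r|\bigr)$, while the paper packages it as the increasing function $\psi(r)=|\partial\Omega_r|/|\Omega_r|=-\frac{d}{dr}\log|\Omega_r|$ crossing $1/r$ exactly once. The remaining technical points (coarea giving $\frac{d}{dr}|\Omega_r|=-|\partial\Omega_r|$, continuity of $r\mapsto|\partial\Omega_r|$, vanishing of $r|\Omega_r|$ at the endpoints versus the paper's isoperimetric blow-up of $\psi$) are handled correctly and are only cosmetic variations.
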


\bigskip
\begin{remark}
In the proof of Lemma \ref{l:rinfty} below (and also of the subsequent Lemma \ref{l:fl}), 
we make  heavily use of the Brunn-Minkowski inequality for the volume functional of the parallel sets $\Omega _r$, and for their surface area measure as well. This motivates the convexity assumption made on the domain $\Omega$.
\end{remark}

{\it Proof of Lemma \ref{l:rinfty}}. 
Let us prove  the following claim:
\[
\psi(r) := \frac{|\partial\Omega_r|}{|\Omega_r|}
\quad r\in [0, \inradius),\ \text{is continuous, increasing and}\
\lim_{r\to\inradius-} \psi(r) = +\infty.
\]
Namely, by the Brunn--Minkowski inequality the functions
$r\mapsto |\Omega_r|^{1/n}$,
$r\mapsto |\partial\Omega_r|^{1/(n-1)}$ are concave in $[0, \inradius]$.
(Here and in the following, $|\partial\Omega_r|$ denotes the $(n-1)$-dimensional
Hausdorff measure of $\partial\Omega_r$.)
Hence, $\psi$ is continuous in $[0,\inradius)$, and the composition
$r \mapsto \log |\Omega_r| = n \log |\Omega_r|^{1/n}$
is concave in 
the same interval. In particular, 
since $\frac{d}{dr} |\Omega_r| = -|\partial\Omega_r|$, we conclude that
\[
\psi(r) = -\frac{d}{dr} \log |\Omega_r|
\]
is increasing in $[0,\inradius)$.
Finally, from the isoperimetric inequality we have 
\[
\psi(r)^n = \frac{|\partial\Omega_r|^n}{|\Omega_r|^n}
= \frac{|\partial\Omega_r|^n}{|\Omega_r|^{n-1}} \cdot \frac{1}{|\Omega_r|}
\geq \frac{|\partial B_1|^n}{|B_1|^{n-1}} \cdot \frac{1}{|\Omega_r|}
\to +\infty,
\quad r\to \inradius^-.
\]
The claim follows.  As a consequence, there exists a unique value $\r\in (0, R _\Omega)$ such that 
\begin{equation}\label{f:cfr}
\frac{|\partial\Omega_r|}{|\Omega_r|} < \frac{1}{r}
\quad\forall r\in(0,\r),\qquad
\frac{|\partial\Omega_r|}{|\Omega_r|} > \frac{1}{r}
\quad\forall r\in (\r, \inradius)\,.
\end{equation}

Finally, in order to determine the maximum of the function $\varphi(r):=r |\Omega_r|$  
on $[0, \inradius]$,  we compute its first derivative
\[
\varphi'(r) = |\Omega_r| - r |\partial\Omega_r| = 
\varphi(r) \left(\frac{1}{r} - \frac{|\partial\Omega_r|}{|\Omega_r|}\right).
\] 
By \eqref{f:cfr}, it holds
$\varphi'(r) > 0$ for $r\in (0, \r)$
and $\varphi'(r) < 0$ for $r\in (\r, \inradius)$,
hence $\varphi$ attains its strict maximum at $\r$.
\qed

\medskip
In the next lemma, which is the key step towards the completion of the proof of Theorem~\ref{t:Linfty}, we study the behaviour of the function
\begin{equation}\label{f:fl}
f_\Lambda(r) := J_\Lambda(v_r) - J_\Lambda(1)\, , \qquad r\in (0, \inradius]\, , 
\end{equation}
where $v_r$ is defined by \eqref{f:ur},
and we analyze in particular the value of 
\begin{equation}\label{f:muL} 
\mu_\Lambda  := \min _{ (0, \inradius] } f _\Lambda (r) \,.
\end{equation} 

\medskip

\begin{lemma}\label{l:fl}
There exist two values
$0<\Lbda < \Lbdb$, with
\begin{equation}\label{f:Lbdb}
\Lbdb
:= 
\frac{1}{\r |\Omega_{\r}|}
\,,
\end{equation} 
being $\r$ given by Lemma \ref{l:rinfty}, such that:
\smallskip
\begin{itemize}
\item[(a)]
For every $\Lambda\in (0, \Lbda]$, the function
$f_\Lambda$ is strictly monotone decreasing
(and, in particular, $\mu_\Lambda  =  f_\Lambda(\inradius) = 1/\inradius > 0$).

\smallskip
\item[(b)]
For every $\Lambda\geq \Lbda$, there exist points
$0 < r_\Lambda\leq \rho_\Lambda< \inradius$ such that:
\begin{gather*}
r_{\Lbda} = \rho_{\Lbda},\qquad
\Lambda\mapsto r_\Lambda\
\text{strictly decreasing},
\quad
\Lambda\mapsto \rho_\Lambda\
\text{strictly increasing},
\\
f_\Lbda(r_\Lambda) = f_\Lbda(\rho_\Lambda) = 0,
\qquad
f_\Lbda(r) > 0
\ \Longleftrightarrow \ r \in (r_\Lambda, \rho_\Lambda).
\end{gather*}
In particular, for $\Lambda = \Lbda$, the map $f _\Lambda$ admits a  flex at $r_{\Lbda} = \rho_{\Lbda}$, whereas, 
for every $\Lambda > \Lbda  $,  
$f _\Lambda$ admits a local minimum at $r_\Lambda$ and a local maximum at $\rho _\Lambda$.

\smallskip
\item[(c)]  For every $\Lambda \in (0,  \Lbdb)$, it holds 
$ \mu_\Lambda   >0$ (and $\mu_\Lambda  = \min  \{ 1/ \inradius, f _\L ( r _\L) \}$).

\smallskip
\item[(d)] For every $\Lambda \geq   \Lbdb$, it holds 
$\mu_\Lambda  = f _\L ( r _\L)\leq 0$, 
and $\mu_\Lambda  = 0$ if and only if $\L = \Lbdb$.

\end{itemize}
\end{lemma}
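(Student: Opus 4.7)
The plan is to reduce the study of $f_\Lambda$ to the analysis of two scalar auxiliary functions on $(0,\inradius)$, whose shape comes from Brunn--Minkowski. A direct computation, using $v_r\in\llipst_1(\Omega)$ with $\Lip{v_r}=1/r$, $|\{v_r>0\}|=|\Omega|-|\Omega_r|$, and $\tfrac{d}{dr}|\Omega_r| = -|\partial\Omega_r|$, gives
\[
f_\Lambda(r) \;=\; \frac{1}{r} - \Lambda |\Omega_r|, \qquad f_\Lambda'(r) \;=\; -\frac{1}{r^2} + \Lambda|\partial \Omega_r|.
\]
Setting
\[
h(r) := \frac{1}{r|\Omega_r|}, \qquad g(r) := \frac{1}{r^2|\partial\Omega_r|},
\]
we have $f_\Lambda(r)=0 \Leftrightarrow h(r)=\Lambda$ and $f_\Lambda'(r)=0 \Leftrightarrow g(r)=\Lambda$. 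Lemma~\ref{l:rinfty} already identifies $\min h = \Lambda^*$, attained uniquely at $r^*$; so the whole lemma reduces to understanding $g$ and its relation with $h$.

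The first substantive step is to show that $g$ is strictly convex on $(0,\inradius)$, from which the structure in parts (a) and (b) follows. By Brunn--Minkowski, $|\partial\Omega_r|^{1/(n-1)}$ is concave, hence $\log|\partial\Omega_r|$ is concave, hence $(\log|\partial\Omega_r|)'$ is non-increasing. Therefore
\[
(\log g)'(r) = -\frac{2}{r} - (\log |\partial\Omega_r|)'
\]
is strictly increasing (since $-2/r$ is strictly increasing and $-(\log|\partial\Omega_r|)'$ is non-decreasing). Let $r_0$ be the unique minimizer of $g$ and set $\Lambda' := g(r_0)$. Then for $\Lambda\le\Lambda'$, $f_\Lambda'\le 0$ on $(0,\inradius)$ with at most one zero, so $f_\Lambda$ is strictly decreasing and $\mu_\Lambda=f_\Lambda(\inradius)=1/\inradius$, giving (a). For $\Lambda\ge\Lambda'$, strict convexity yields two roots $r_\Lambda\le\rho_\Lambda$ of $g=\Lambda$ (coinciding at $\Lambda'$); $f_\Lambda'$ is positive exactly on $(r_\Lambda,\rho_\Lambda)$, $r_\Lambda$ is a local minimum and $\rho_\Lambda$ a local maximum of $f_\Lambda$, and the monotonicity of $\Lambda\mapsto r_\Lambda$ (decreasing) and $\Lambda\mapsto\rho_\Lambda$ (increasing) follows from strict monotonicity of $g$ on the two sides of $r_0$. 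This proves (b).

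The delicate point is the strict inequality $\Lambda'<\Lambda^*$. From the relation $r^*|\partial\Omega_{r^*}|=|\Omega_{r^*}|$ supplied by Lemma~\ref{l:rinfty}, one checks $g(r^*)=h(r^*)=\Lambda^*$, so it remains to rule out $r^*=r_0$. For this I form the ratio $g/h = 1/(r\,\psi(r))$ with $\psi:=|\partial\Omega_r|/|\Omega_r|$: by Lemma~\ref{l:rinfty}, $\psi$ is strictly increasing, hence so is $r\psi(r)$, hence $g/h$ is strictly decreasing. Since $h'(r^*)=0$, differentiating at $r^*$ gives $(g/h)'(r^*)=g'(r^*)/h(r^*)<0$, so $g'(r^*)<0$, which forces $r^*<r_0$ and therefore $\Lambda^*=g(r^*)>g(r_0)=\Lambda'$. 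Parts (c) and (d) now follow: for $\Lambda<\Lambda^*$, $h\ge\Lambda^*>\Lambda$ yields $f_\Lambda>0$ and $\mu_\Lambda=\min\{f_\Lambda(r_\Lambda),1/\inradius\}>0$; at $\Lambda=\Lambda^*$, $r^*<r_0$ identifies $r^*$ with the smaller root $r_{\Lambda^*}$ and $f_{\Lambda^*}(r^*)=0$ gives $\mu_{\Lambda^*}=0$; for $\Lambda>\Lambda^*$, the identity $f_\Lambda(r^*)=-(\Lambda-\Lambda^*)|\Omega_{r^*}|<0$, combined with $r_\Lambda<r^*<\rho_\Lambda$ and the fact that $f_\Lambda$ is increasing on $(r_\Lambda,\rho_\Lambda)$, gives $f_\Lambda(r_\Lambda)<f_\Lambda(r^*)<0$ and hence $\mu_\Lambda=f_\Lambda(r_\Lambda)<0$.

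The main obstacle is the strict inequality $\Lambda'<\Lambda^*$, because $\Lambda'$ and $\Lambda^*$ are minima of \emph{different} functions, one controlled by Brunn--Minkowski (for $g$) and the other by monotonicity of $\psi$ (for $h$). The algebraic bridge $g/h=1/(r\psi)$ is what converts the first-order condition $h'(r^*)=0$ into a strict one-sided condition $g'(r^*)<0$, and once this is in hand, the remaining parts of the lemma are essentially bookkeeping.
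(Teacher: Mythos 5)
Your argument is correct and, for parts (a)--(b), it is essentially the paper's argument in logarithmic form: the paper rewrites $f_\Lambda'(r)>0$ as $(\Lambda|\partial\Omega_r|)^{1/(n-1)}>r^{-2/(n-1)}$ and compares a concave decreasing function with a convex decreasing one, while you take logarithms and observe that $\log g$ is strictly convex for $g(r)=1/(r^2|\partial\Omega_r|)$; both rest on the same Brunn--Minkowski concavity of $|\partial\Omega_r|^{1/(n-1)}$ and both conclude that $\{f_\Lambda'>0\}$ is an interval $(r_\Lambda,\rho_\Lambda)$ depending monotonically on $\Lambda$. Where you genuinely diverge is in (c)--(d) and in the strict inequality $\Lbda<\Lbdb$. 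The paper defines $\Lbdb$ as the threshold at which $\mu_\Lambda$ changes sign (this exists because $\Lambda\mapsto\mu_\Lambda$ is continuous and decreasing, and is $>\Lbda$ because $\mu_{\Lbda}=1/\inradius>0$), and only afterwards identifies $\Lbdb=(\r|\Omega_{\r}|)^{-1}$ by solving the system $f_\Lambda=f_\Lambda'=0$. You instead define $\Lbdb$ by that formula from the start, as $\min h$ with $h=1/(r|\Omega_r|)$ supplied by Lemma \ref{l:rinfty}, and verify the sign properties directly; the strictness $\Lbda<\Lbdb$ then comes from the identity $g/h=1/(r\psi)$, which converts $h'(\r)=0$ into $g'(\r)<0$ and hence $\min g<g(\r)=\Lbdb$. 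This is a more explicit route: the paper's continuity-in-$\Lambda$ argument obtains the strict inequality more cheaply, but your version has the advantage of locating $\r$ as the \emph{smaller} root $r_{\Lbdb}$ of $g=\Lbdb$, a fact the paper leaves implicit.

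Two technical points, neither fatal. First, $|\partial\Omega_r|$ is only guaranteed one-sided derivatives (its $(n-1)$-st root is concave), so the step $(g/h)'(\r)=g'(\r)/h(\r)<0$ should be read with right derivatives: since $r\psi(r)$ is strictly increasing with $r^*\psi(\r)=1$, one has $q(r):=g(r)/h(r)\leq 1-c\,(r-\r)$ for $r>\r$ close to $\r$, while $h(r)=h(\r)+o(r-\r)$ because $h$ is $C^1$ near $\r$ (as $|\Omega_r|$ is $C^1$ with derivative $-|\partial\Omega_r|$); writing $g=hq$ this still gives $g(r)<g(\r)$ for $r$ slightly above $\r$, which is all you need. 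Second, your ``unique minimizer $r_0$ of $g$'' need not be interior: if $|\partial\Omega_r|$ stays bounded away from $0$ as $r\to\inradius$ (e.g.\ an elongated rectangle), $g$ can be strictly decreasing on all of $(0,\inradius]$, the equation $g=\Lambda$ then has a single root, and $\{f_\Lambda'>0\}$ reaches the endpoint $\inradius$, so that $\rho_\Lambda=\inradius$. This imprecision is already present in the statement ($\rho_\Lambda<\inradius$) and in the paper's own proof; it affects neither $r_\Lambda$ nor parts (c)--(d), which only involve the smaller root.
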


\begin{remark} By inspection of the proof of Lemma \ref{l:fl} given hereafter, it turns out that, for every $\L \geq \Lbda$, the radius $r _\L$ 
can be identified as stated in Theorem \ref{t:Linfty}, namely as 
the smallest root in $(0, \inradius]$ of the equation
$$\frac{1}{r^2} = \Lambda  |\partial\Omega_r|\,.$$
\end{remark}

{\it Proof of Lemma \ref{l:fl}}. 
A direct computation shows that
\[
J_\Lambda(v_r) = \Lip{v_r} + \Lambda(|\{v_r > 0\}|)
= \frac{1}{r} + \Lambda (|\Omega| - |\Omega_r|),
\qquad r\in (0, \inradius], 
\]
hence 
\[
f_\Lambda(r)
=  \frac{1}{r} - \Lambda  |\Omega_r|\,,
\quad
f_\Lambda'(r)
=  -\frac{1}{r^2} + \Lambda  |\partial\Omega_r|\,,
\qquad r\in (0, \inradius]\,.
\]
We have that
\[
f_\Lambda'(r) > 0
\ \Longleftrightarrow \
\sqrt[n-1]{\Lambda|\partial\Omega_r|}
> r^{-2/(n-1)}.
\]
Since, by the Brunn--Minkowski theorem, the map
$r \mapsto \sqrt[n-1]{|\partial\Omega_r|}$ is decreasing and concave in $[0, \inradius]$,
whereas $r \mapsto r^{-2/(n-1)}$ is decreasing and convex in $(0, \inradius]$,
there exists a unique $\Lambda' > 0$
such that (a) and (b) hold.

Observe that $f_0(r) = 1/r$ is positive and monotone decreasing in $(0, \inradius]$, 
whereas for every $r\in (0, \inradius)$ the map $\Lambda \mapsto f_\Lambda(r)$ is
affine with strictly negative slope. 
By continuity, there exists a value $\Lbdb > \Lbda$ such that
$$\min_{r\in (0, \inradius]} f_\Lambda(r) > 0
\ \ \forall \Lambda < \Lbdb\, , 
\qquad
\min_{r\in (0, \inradius]} f_{\Lbdb}(r) = 0,
\qquad
\min_{r\in (0, \inradius]} f_\Lambda(r) < 0
\ \ \forall \Lambda > \Lbdb.
$$
 
Moreover, if $\r \in \argmin f_{\Lambda^*}$,
then the pair $(\r, \Lbdb)$
satisfies the conditions
\[
f_\Lambda(r) = 0,
\quad
f'_\Lambda(r) = 0,
\]
i.e.
\begin{equation}\label{f:lambda}
\frac{1}{r} - \Lambda|\Omega_r| = 0,
\quad
-\frac{1}{r^2} + \Lambda|\partial\Omega_r| = 0.
\end{equation}
From the first equation we have $\Lambda = (r |\Omega_r|)^{-1}$; substituting into the second equation
we get the condition
\begin{equation}\label{f:psi}
\frac{|\partial\Omega_r|}{|\Omega_r|}= \frac{1}{r}\,.
\end{equation}
By Lemma~\ref{l:rinfty},
there exists a unique $\r\in (0, \inradius)$ satisfying \eqref{f:psi},
so that $\Lbdb= (\r |\Omega_{\r}|)^{-1}$.

The properties stated in (c) and (d) follow.
\qed

\bigskip
\begin{example} 
When $\Omega = B_R \subset \R^n$, for $r\in(0,R]$ we have:
\[
f_\Lambda(r) = \frac{1}{r} - \kappa_n \Lambda (R - r)^n,
\quad
f_\Lambda'(r) = -\frac{1}{r^2} + n \kappa_n \Lambda (R- r)^{n-1}.
\]
An explicit computation gives 
\[
\Lbda = \frac{1}{4} \left(\frac{n}{n-1}\right)^{n-1}\Lbdb \,.
\]
where the value of $\Lambda ^*$ has been already computed in Example \ref{e:ball}. 

The graph of $f_\Lambda$ in the case $n=2$ and $R=1$, for the three choices  $\Lambda = \Lbda\simeq 1.07$, 
$\Lambda=\Lbdb \simeq 2.15$,
and $\Lambda = 3$, is shown in Figure~\ref{fig:ball}.
\begin{figure}
	\includegraphics[width=8cm]{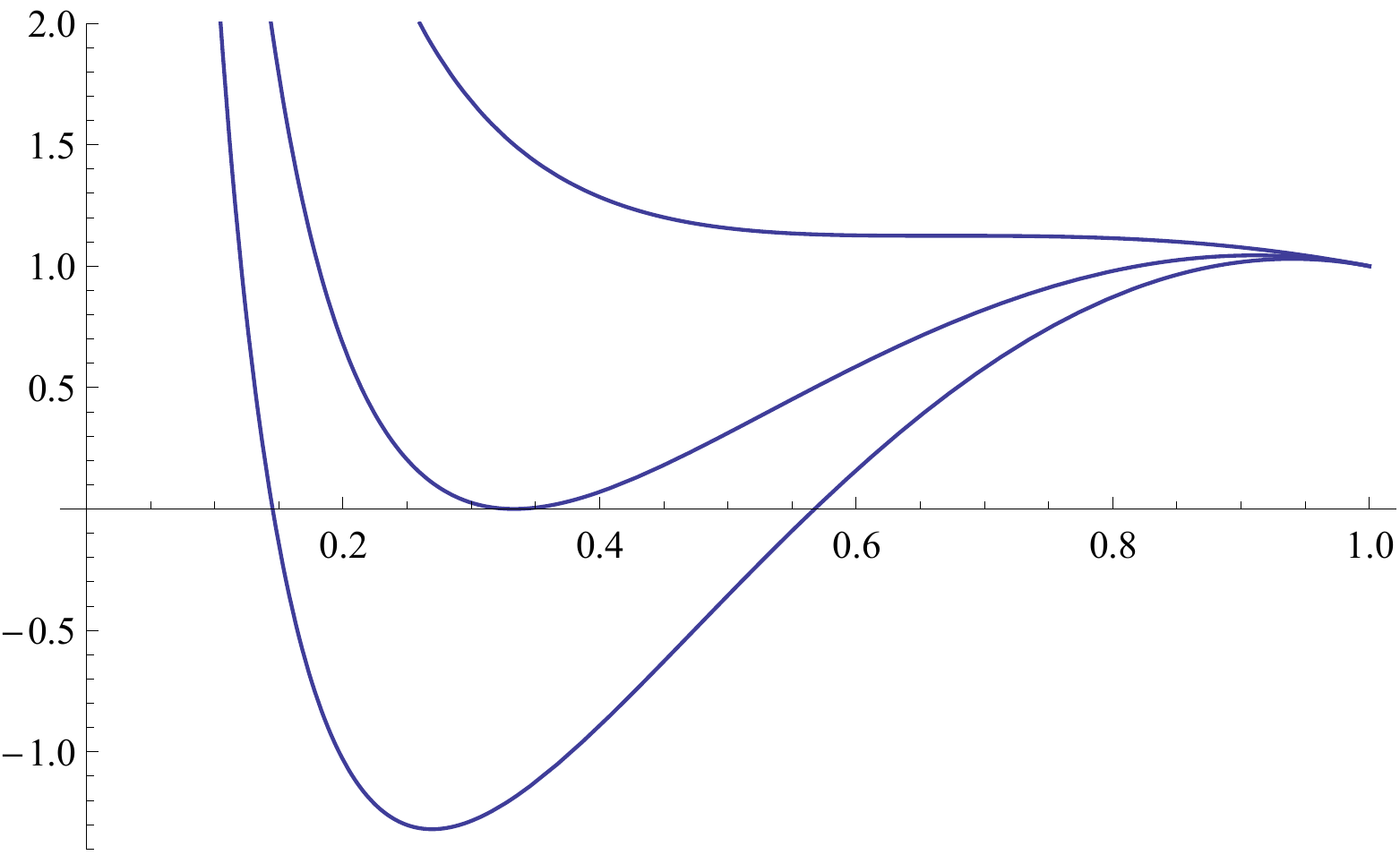}
	\label{fig:ball}
	\caption{ Plot of the map $f_\Lambda$ when $\Omega$ is the unit two--dimensional ball, for $\Lambda = \Lbda\simeq 1.07$ (top), $\Lambda=\Lbdb\simeq 2.15$ (center),
		and $\Lambda = 3$ (bottom)}
\end{figure}
\end{example}

\bigskip
\begin{proof}[Proof of Theorem \ref{t:Linfty}] 
The existence of a unique value $r ^*$ satisfying \eqref{f:r*} has been already proved in Lemma \ref{l:rinfty}. 

Let now $\Lambda >0$. 
By Lemma~\ref{l:existence}, the functional $J_\Lambda$ admits
a minimizer $v \in \llipst_1(\Omega)$.

Since $v = 1$ on $\partial\Omega$ and
$|v(x) - v(y)| \leq \Lip{v}\, |x-y|$ for every 
$x,y\in\overline{\Omega}$,
we observe that 
\begin{equation}\label{f:a}
\{v > 0\} \supseteq D_r, \hbox{ with } r := 1/\Lip{v} \,.
\end{equation}
In particular, 
\begin{equation}\label{f:b}
\text{ if } \Lip{v} < 1/\inradius, \text{ then }\{v > 0\} = \Omega \,.
\end{equation}
Then we distinguish two cases.

\smallskip
If $\Lip{v} < 1/\inradius$, 
then by \eqref{f:a} we conclude that necessarily $v\equiv 1$
and $\Lambda \leq \Lbdb$, for otherwise
$J_\Lambda(v_{r_\Lambda}) < J_\Lambda(1)$
by Lemma~\ref{l:fl}.

\smallskip
If $\Lip{v} \geq 1/\inradius$, let $\overline r := 1/\Lip{v} \in (0, \inradius]$.
The function $v_{\overline r} $, defined in \eqref{f:ur}, has the same Lipschitz constant as $v$.
Moreover, since $\{v_{\overline r} > 0\} = D_{\overline r}$,  by \eqref{f:b} we deduce that
$J_\Lambda(v_{\overline r}) \leq J_\Lambda(v)$.
Since $v$ is a minimizer, then equality must hold and hence
$\{v > 0\} = D_{\overline r}$.

Then, recalling the definition \eqref{f:fl} of the function $f _\Lambda$,
we have 
\[
\min_{(0,\inradius]} f_\Lambda
\leq f_\Lambda(\overline r) = J_\Lambda(v_{\overline r}) - J_\Lambda(1)
=  J_\Lambda(v) - J_\Lambda(1) \leq \min_{(0,\inradius]} f_\Lambda  \,.
\]
Hence, we have $\min_{(0,\inradius]} f_\Lambda= J_\Lambda(v) - J_\Lambda(1) \leq 0$. 
 By Lemma~\ref{l:fl}, this implies that $\Lambda \geq \Lambda ^*$, and $\overline r = r _\Lambda$. 

From the above analysis, it follows that $\Lambda_\infty  = \Lbdb$,
and that, for every $\Lambda \geq \Lbdb$, $v_{r_\Lambda}$  is a non-constant solution to $(P)_\Lambda$  (and any other non-contant solution has the same Lipschitz constant and the same positivity set as $v_{r_\Lambda}$). 
\end{proof}

\bigskip

\begin{proof}[Proof of Theorem \ref{t:isoper}] 
By Theorem \ref{t:Linfty}, Lemma \ref{l:rinfty},  and the explicit computation in Example~\ref{e:ball},
we have to prove that
\[
\max_{r\in [0, \inradius]} r |\Omega_r|
\leq
\frac{\kappa_n n^n R^{n+1}}{(n+1)^{n+1}},
\qquad
\text{with}\
R = \left(\frac{|\Omega|}{\kappa_n}\right)^{1/n}\,.
\]
By the Brunn--Minkowski inequality,
the function $\gamma(r) := |\Omega_r|^{1/n}$ is concave in $[0, \inradius]$,
hence
$\gamma(r) \leq \gamma(0) + r\, \gamma'_+(0)$, i.e.
\[
|\Omega_r|^{1/n} \leq |\Omega|^{1/n} - \frac{r}{n}\, |\Omega|^{-1+1/n} |\partial\Omega|\,,
\qquad\forall r\in [0, \inradius],
\]
or
\begin{equation}\label{f:fi1}
|\Omega_r| \leq |\Omega| 
\left(1 - \frac{r}{n}\, \frac{|\partial\Omega|}{|\Omega|}\right)^n\,,
\qquad\forall r\in [0, \inradius].
\end{equation}
(For related inequalities see \cite[\S~3]{Cg}.)
Hence,
\begin{equation}\label{f:fi}
	r |\Omega_r| \leq r |\Omega| 
	\left(1 - \frac{r}{n}\, \frac{|\partial\Omega|}{|\Omega|}\right)^n
	=: \varphi(r)\,.
	\qquad\forall r\in [0, \inradius].
\end{equation}

It is easy to check that $\varphi$ attains its maximum at
$r_0 := \frac{n}{n+1} \frac{ |\Omega|}{ |\partial\Omega|}$,
hence
\[
\max_{[0, \inradius]} \varphi
= \varphi(r_0) = \frac{n^{n+1}}{(n+1)^{n+1}} \cdot \frac{|\Omega|^2}{|\partial\Omega|}\,.
\]
By the isoperimetric inequality and the definition of $R$ we have that
\[
\frac{|\Omega|^2}{|\partial\Omega|}
= |\Omega|^{(n+1)/n} \frac{|\Omega|^{(n-1)/n}}{|\partial\Omega|}
	\leq \kappa_n^{(n+1)/n} R^{n+1} \frac{1}{n \kappa_n^{1/n}}
	= \frac{\kappa_n R^{n+1}}{n},
\]
with equality if and only if $\Omega$ is a ball, and finally
\[
\max_{r\in [0, \inradius]} r |\Omega_r|
\leq \varphi(r_0)
\leq
\frac{\kappa_nn^{n} R^{n+1}}{(n+1)^{n+1}}\,,
\]
with equality if and only if $\Omega$ is a ball.
\end{proof}

\bigskip

\begin{proof}[Proof of Theorem \ref{t:uniqP}] 
We are going to prove the following two facts:
\begin{itemize}
\item[(a)] If $\L \geq \L _\infty$ and $\L < \L _{sing}$, there are at least two distinct solutions. 
\item[(b)] If $\L > \L _\infty$ and $\L \geq \L _{sing}$, there is a unique solution. 
\end{itemize} 
Let us check first that the statement easily follows from (a) and (b). 

{\it Case $\L_{sing} \leq\L _\infty$}.  If there is a unique solution, it must be $\L > \L _\infty$  (otherwise both $v_{r^*}$ and the constant function $1$ are solutions). Viceversa, if $\L > \L _\infty$, by (b) there is a unique solution. 

{\it Case $\L_{sing} >\L _\infty$}.  If there is a unique solution, it must be $\L \geq \L _{sing}$ (otherwise by (a) there would be at least two solutions). Viceversa, if $\L \geq \L _{sing}$, by (b) there is a unique solution. 

Let us now prove (a). Let $\L \geq \L _\infty$  and $\L < \L _{sing}$. Then, recalling from Lemma \ref{l:fl} that the map $\Lambda \mapsto r _\Lambda$ is monotone decreasing, we infer that
$r_\Lambda > r_{sing}$,
so that $v_{r_\Lambda}$ is not everywhere differentiable in $D_{r_\Lambda}$. On the other hand, 
the function $w_{r_\L}$ defined by \eqref{f:pot}, which by  Corollary \ref{c:corharm}  
is a minimizer of $J_\Lambda$, is differentiable everywhere in $D_{r_\Lambda}$  (see \cite{EvSm}). Hence
we have at least two different solutions, $v_{r_\Lambda}$ and $w_{r_\L}$.

Finally, let us prove (b).  Let $\L > \L _\infty$ and $\L \geq \L _{sing}$. 
Since  $\L > \L _\infty$, by Theorem \ref{t:Linfty} any solution $v$ satisfies 
$\{v > 0\}\cap\Omega = D_{r_\Lambda}$.
Let us prove that $v = v_{r_\Lambda}$.

Assume by contradiction that there exists a point $x\in D_{r_\Lambda}$ such that
$v(x) \neq v_{r_\Lambda}(x)$.

Since  $r_\Lambda \leq r_{sing}$, 
the distance function $\dist(\cdot, \partial\Omega)$ from the boundary of $\Omega$
is differentiable everywhere in $D_{r_\Lambda}$.
Therefore the point $x$ admits a unique projection $y\in\partial\Omega$
such that $|x-y| = \dist(x, \partial\Omega)$.
Setting $\nu := \frac{x-y}{|x-y|}$, and $y_t := y + t\nu$, we have 
\[
y_t \in D_{r_\Lambda},
\quad\forall t\in (0, r_\Lambda),
\qquad
z := y_{r_\Lambda}\in \partial \Omega_{r_\Lambda}.
\]
Since $v_{r_\Lambda}(z) = v(z) = 0$,
if $v(x) > v_{r_\Lambda}(x)$, then we would have
\begin{equation}\label{f:contra1}
v(x) - v(z) > v_{r_\Lambda}(x) - v_{r_\Lambda}(z) = \frac{|x-z|}{r_\Lambda}\,,
\end{equation}
hence $\Lip{v} > 1/r_\Lambda$, a contradiction.
Similarly, if $v(x) < v_{r_\Lambda}(x)$, 
since $v_{r_\Lambda}(y) = v(y) = 1$, then we would have
\begin{equation}\label{f:contra2} 
v(y) - v(x) > v_{r_\Lambda}(y) - v_{r_\Lambda}(x)
= \frac{|x-y|}{r_\Lambda},
\end{equation}
and again $\Lip{v} > 1/r_\Lambda$, a contradiction.

Notice that  to obtain the last equality in formulas \eqref{f:contra1} and \eqref{f:contra2} we have used the identity 
$\dist(\xi, \partial\Omega) = r_\Lambda - |\xi -z|$ holding for every $\xi$ in the segment $[y, z]$ 
because, for $r _\Lambda \leq r_{sing}$, $\dist(\cdot, \partial\Omega)$ is differentiable  in $D_{r_\Lambda}$. 
\end{proof}

\section{Proofs of the results in Section \ref{s:Bernoulli}}\label{sec:proof2}

Before giving the proofs of Propositions \ref{p:PP} and \ref{p:varsol}, we need to recall a result
from our paper \cite{CF8} about the Bernoulli problem \eqref{f:Pinfty} on convex domains
 (therein, also the more general case of non-convex domains is considered).  

We first resume a few preliminary definitions. 
	A solution $u$ to \eqref{f:Pinfty}  is called non-trivial
	if the set $\{u = 0\}$ has non-empty interior. 
Given $r \in (0, \inradius)$, we define $w_r$ as the infinity harmonic potential of $\overline \Omega _r $, namely the unique solution to
\begin{equation}\label{f:potr} 
\begin{cases}
\Dinf w_r = 0 & \text{ in } D_r:= \Omega \setminus \overline \Omega _r
\\
w_r= 1 & \text{ on } \partial \Omega
\\ 
w_r = 0 & \text{ in }  \overline \Omega_{r}\,.
\end{cases} 
\end{equation}
Finally, still for $r \in (0, \inradius)$, 
we introduce the subset of $D_r$  defined by 
\[
\widehat{D}_r := \bigcup_{y\in\partial\Omega_r}
\left\{
]y,z[:\ z\in\Pi_{\partial\Omega}(y)
\right\}\, , 
\]
where $]y, z[$ denotes the open (i.e., without the endpoints) segment joining $y$ to $z$, and $\Pi _{\partial \Omega} (y)$ denotes the set of projections of $y$ onto $\partial \Omega$.

\begin{theorem}[See \cite{CF8}]\label{t:B}
	(a) For every $\lambda >  \frac{1}{\inradius}$,
	the function $w_{\frac{1}{\lambda}}$ is the unique non-trivial solution to
	problem \eqref{f:Pinfty} ; moreover it satisfies the estimates
	\begin{equation}\label{f:cfdist}
		1- \lambda \dist (x, \partial \Omega) \leq  w_{\frac{1}{\lambda}} (x) \leq \lambda \dist (x, \partial \Omega _{\frac{1}{\lambda}} )  \quad \text{ in }  \overline D_{\frac{1}{\lambda}} \,, 
		\text{ with equalities in } \widehat D_{\frac{1}{\lambda}}\,.
	\end{equation}

	(b) For every $\lambda \in \big( 0, \frac{1}{\inradius}  \big ]$,
	problem \eqref{f:Pinfty}  does not admit non-trivial solutions.
\end{theorem}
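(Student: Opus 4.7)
The plan is to deploy cone comparison---the workhorse of $\infty$-harmonic analysis following Jensen and Crandall--Evans--Gariepy---in three stages: first establish the two-sided pointwise estimates~\eqref{f:cfdist} for the candidate $w_{1/\lambda}$, which simultaneously certify that $w_{1/\lambda}$ is a non-trivial solution of~\eqref{f:Pinfty}; then show that every non-trivial solution must coincide with $w_{1/\lambda}$; finally rule out non-trivial solutions when $\lambda \leq 1/\inradius$.

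For the estimates, set $r := 1/\lambda$ and, given any $y \in \partial\Omega_r$, test $w_r$ against the cone $c_y(x) := |x-y|/r$. One has $c_y(y) = 0 = w_r(y)$, $c_y \geq 0 = w_r$ on the rest of $\partial\Omega_r$, and $c_y \geq 1 = w_r$ on $\partial\Omega$, the last because $\dist(y,\partial\Omega) = r$ forces $|x-y| \geq r$ for every $x \in \partial\Omega$. Cone comparison from above then gives $w_r \leq c_y$ throughout $D_r$; optimizing over $y$ yields the upper bound in~\eqref{f:cfdist}. A symmetric argument with cones $1 - |x-z|/r$ based at $z \in \partial\Omega$ delivers the lower bound. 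On the set $\widehat{D}_r$, where each $x$ lies on the unique segment joining its projections onto $\partial\Omega$ and $\partial\Omega_r$, the optimal $y$ and $z$ produce cones that coincide along the segment, both inequalities become equalities, and the slope of $w_r$ at the free boundary endpoint is exactly $1/r = \lambda$; this identifies $|\nabla w_{1/\lambda}| = \lambda$ on $\partial\Omega_{1/\lambda}$ (in the relevant sided sense) and certifies $w_{1/\lambda}$ as a solution of~\eqref{f:Pinfty}.

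For uniqueness, let $u$ be any non-trivial solution, set $D := \{u > 0\} \cap \Omega$, and consider any $y \in \partial D \cap \Omega$ with $r_y := \dist(y,\partial\Omega)$. The same cone test applied on $\partial D$ (where $c_y \geq 1 = u$ on $\partial\Omega$ and $c_y \geq 0 = u$ on $\partial D \cap \Omega$) yields $u(x) \leq |x-y|/r_y$ in $D$ and hence $|\nabla u(y)| \leq 1/r_y$; combined with the viscosity identity $|\nabla u(y)| = \lambda$ this forces $r_y \leq 1/\lambda$. A dual cone comparison from below (the step I expect to be most delicate, because it is here that the ``$=$'' in the Bernoulli condition---not merely ``$\leq$''---has to be exploited in viscosity sense) yields the reverse inequality, so $\partial D \cap \Omega \subseteq \partial\Omega_{1/\lambda}$. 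Lipschitz continuity from $u = 1$ on $\partial\Omega$ already gives $D \supseteq D_{1/\lambda}$; and since $\Omega_{1/\lambda}$ is convex (hence connected) and $\partial D \cap \Omega_{1/\lambda} = \emptyset$, the set $D \cap \Omega_{1/\lambda}$ is clopen in $\Omega_{1/\lambda}$ and must be either empty or all of $\Omega_{1/\lambda}$---the second option giving $D = \Omega$, contradicting non-triviality. Hence $D = D_{1/\lambda}$, and uniqueness of the $\infty$-harmonic extension gives $u = w_{1/\lambda}$. The same reasoning settles (b): when $\lambda \leq 1/\inradius$ every free boundary point would satisfy $r_y = 1/\lambda \geq \inradius$, forcing $\partial D \cap \Omega$ into the lower-dimensional incenter set $\{\dist(\cdot,\partial\Omega) = \inradius\}$, incompatible with $\{u=0\}$ having non-empty interior.
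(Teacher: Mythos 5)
First, a point of comparison: the paper itself gives no proof of Theorem~\ref{t:B} --- it is imported verbatim from the companion paper \cite{CF8} --- so there is no internal argument to measure yours against. Judged on its own terms, the first half of your proposal is sound: the two-sided cone comparison (cones $|x-y|/r$ vertexed at $y\in\partial\Omega_r$ from above, cones $1-|x-z|/r$ vertexed at $z\in\partial\Omega$ from below, both with vertex outside $D_r$) correctly yields \eqref{f:cfdist}, and the observation that the two bounds pinch on each segment $]y,z[$ with $|y-z|=r$ gives the equalities on $\widehat D_{1/\lambda}$ and the slope $\lambda$ at the free boundary along those segments. This is the standard and, I believe, the intended route to the estimates.

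The uniqueness claim in (a) and all of (b), however, rest on a step you do not actually supply. Your first cone test gives only $\dist(y,\partial\Omega)\le 1/\lambda$ for free boundary points $y$; the reverse inequality $\dist(y,\partial\Omega)\ge 1/\lambda$ is what forces $\partial\{u>0\}\cap\Omega\subseteq\partial\Omega_{1/\lambda}$ in (a) and what pushes the free boundary into the empty-interior high ridge in (b), and you dispose of it with ``a dual cone comparison from below \dots yields the reverse inequality.'' As stated this is circular: to place a cone $1-|x-z|/\rho$ vertexed at $z\in\partial\Omega$ \emph{below} $u$ you must already know that $u$ is nonnegative and vanishes only at distance $\ge\rho$ from $z$, which is the conclusion you are after. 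What is genuinely needed is a non-degeneracy (lower growth) estimate extracted from the subsolution half of the viscosity formulation of $|\nabla u|=\lambda$ on the free boundary --- a formulation you never pin down, even though the whole content of the overdetermined condition for a function that is merely Lipschitz across $\partial\{u>0\}$ lives there. Relatedly, the assertion that ``Lipschitz continuity from $u=1$ on $\partial\Omega$ already gives $D\supseteq D_{1/\lambda}$'' presupposes $\Lip{u}\le\lambda$, which is not known a priori for an arbitrary non-trivial solution (the correct route is the clopen/connectedness argument you use later, but only \emph{after} the missing lower bound on $\dist(y,\partial\Omega)$ is in hand). So the skeleton is right, but the load-bearing step of the uniqueness and of part (b) is exactly the one left as a remark.
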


We can now give: 

\bigskip

\begin{proof}[Proof of Proposition \ref{p:PP}]  
From Corollary \ref{c:corharm} we know that, for every $\L \geq \L _\infty$, the infinity harmonic potential $w _{r_\L}$ of $D _{r_\L}$ is a solution to problem  \ref{f:P}. Moreover, by Theorem  \ref{t:B}, the function $w _{r_\L}$ solves problem  \eqref{f:Pinfty}   (for  $\l =   \frac{1}{r_\L}$). 

In order to prove that there are no other solutions to \ref{f:P} which solve \eqref{f:Pinfty}, it is enough to recall that 
the positivity set of any solution to problem \ref{f:P} is given by $D_{r_\L}$
({\it cf.}\ \eqref{f:split} in Theorem \ref{t:Linfty}); since clearly  a solution to \ref{f:P} needs to be infinity harmonic in its positivity set in order to solve \eqref{f:Pinfty}, it agrees necessarily with $w_{r_\L}$.  

Finally,  since we know respectively 
from  Theorem \ref{t:Linfty} and from Corollary \ref{c:corharm}   that $v_{r_\L}$ and
$w _{r _\L}$ are both solutions to \ref{f:P}, in case of uniqueness we conclude
that  $v_{r_\L} = w _{r _\L}$; moreover, from the first part of the statement already proved we deduce that such function
solves \eqref{f:Pinfty}  (for  $\l =   \frac{1}{r_\L}$). 
\end{proof}

\bigskip
\begin{proof}[Proof of Proposition \ref{p:varsol}]
By  Proposition \ref{p:PP}, 
among  solutions to problem \ref{f:P}  
there is one  which solves problem \eqref{f:Pinfty}  (for $\l =   \frac{1}{r_\L}$) if and only if  
$
\L \geq \L _\infty$ (and in this case it is given precisely by  
the function $w _{r_\L}$).
Then it is enough to observe that, by the continuity and decreasing monotonicity of the map $\mathcal R: \L \mapsto r_\L$, condition $\L \geq \L _\infty$ is equivalent to \eqref{f:cond1} (being $\l =   \frac{1}{r_\L}$).  

The last part of the statement follows from Theorem \ref{t:B}, combined with the fact that it holds
$\widehat D _r = D _r$ if and only if $r \leq r _{sing}$.
\end{proof}

\section{Proofs of the results in Section \ref{s:approx}.}\label{sec:proof3} 

Let $\Lambda \geq \Lambda _\infty$. As explained in Section \ref{s:approx}, our first step towards the proof of Theorem  \ref{t:convmin} is the study of the asymptotics of 
a sequence of solutions to problem \eqref{f:minp}, for a fixed $\lambda$, in the limit as $p = p _j \to + \infty$.

To that aim, it is useful to notice preliminarily that
the minimum of the functionals $\JLia$ defined in \eqref{f:double} can be explicitly computed as 
\begin{equation}\label{f:minvalue} 
\min_{\llipst_1(\Omega)}\JLia = 
\begin{cases}
\lambda + \Lambda |\Omega|
&\text{  if}\ \lambda \in [0, 1/\inradius),\\
\smallskip
\lambda + \Lambda |D_{1/\lambda}|
&\text{  if}\ \lambda \geq 1/\inradius.
\end{cases}
\end{equation} 
Indeed, if $\lambda < 1/\inradius$,
then $\{u > 0\} = \overline{\Omega}$ for every $u\in\llipst_1(\Omega)$ with
$\Lip{u} \leq \lambda$;
on the other hand, 
for $\lambda \geq 1/\inradius$,
by arguing as in the proof of Theorem \ref{t:Linfty} we see that a minimizer is given 
by the function $v_{1/\lambda} = [1- \lambda\, \dist(\cdot, \partial\Omega)]_+$. 

\begin{figure}
	\includegraphics[width=8cm]{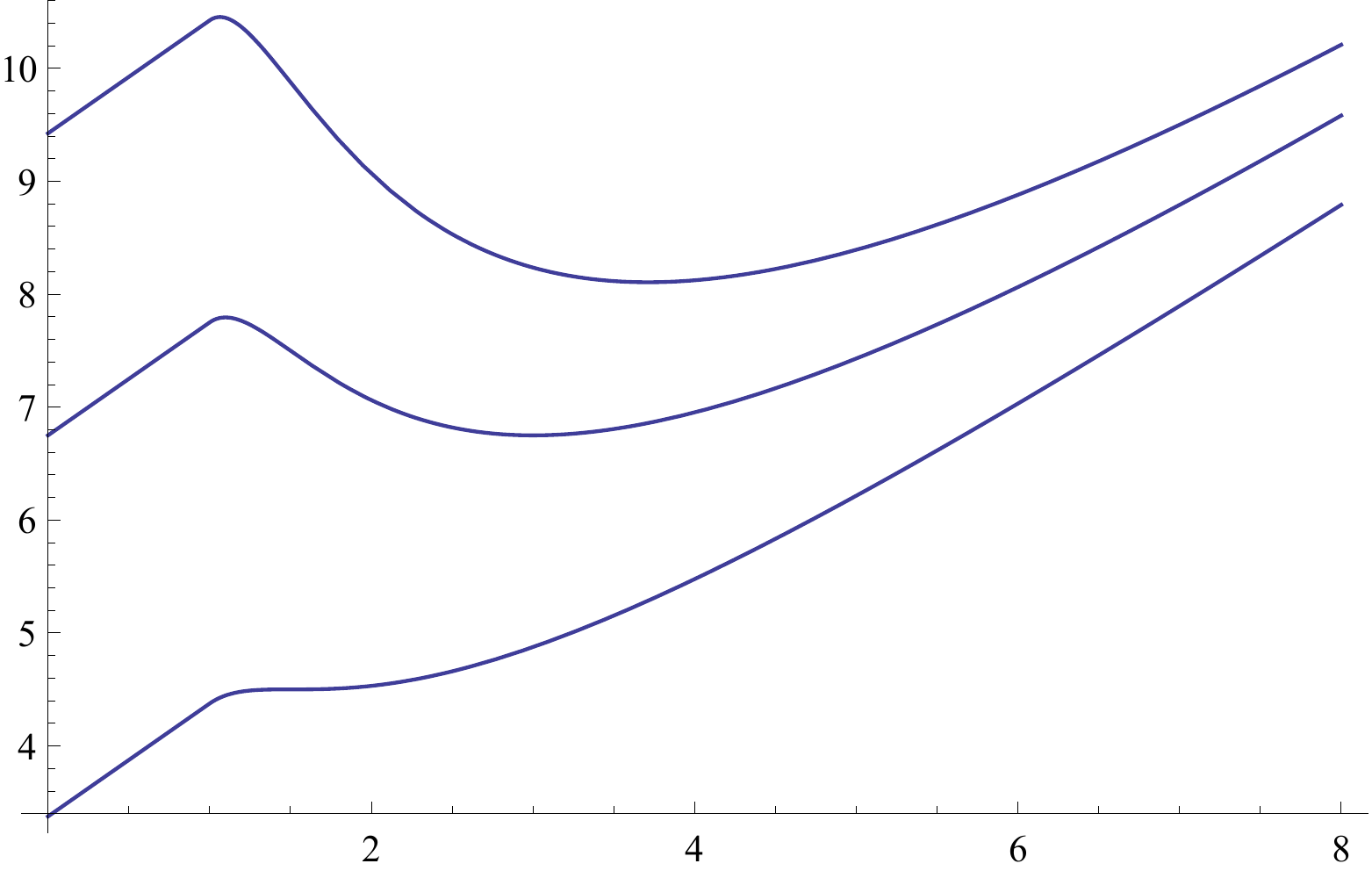}
	\caption{Plot of the map $\lambda\mapsto \min \JLia$  in \eqref{f:minvalue} when $\Omega$ is the unit two--dimensional ball, for $\Lambda = \Lbda\simeq 1.07$ (top), $\Lambda=\Lbdb\simeq 2.15$ (center),
		and $\Lambda = 3$ (bottom)}
	\label{fig:jball}
\end{figure}

Figure \ref{fig:jball} represents the plot of the map $\lambda\mapsto \min \JLia$ when $\Omega$ is  the unit two--dimensional ball, 
for three different values of $\Lambda$.

\begin{lemma}[Convergence of minimizers at fixed $\lambda$]\label{l:conv}
Let $\L \geq \L _\infty$. Let $(u^{p_j, \lambda})_j$ be a sequence of solutions to problem \eqref{f:minp},  for a given $\lambda>0$ and $p = p _j \to + \infty$.  Then, up to passing to a (not relabeled) subsequence, we have 
\begin{equation}\label{f:lca}
u^{p_j, \lambda} \rightharpoonup u^\lambda\
\text{weakly in}\ W^{1,q}(\Omega) \quad \forall q>1\, , 
\qquad
u^{p_j, \lambda} \to u^\lambda,\
\text{uniformly in}\ \overline{\Omega}\,,
\end{equation}
where $u ^ \lambda$ is a solution to problem \eqref{f:mina} and is infinity harmonic in its positivity set. 

Furthermore:
\smallskip
\begin{itemize}
\item[--] if $\lambda\in [0, 1/\inradius)$, then
$u^\lambda\equiv 1$ and
$\upja \equiv 1$ for $j$ large enough;
\smallskip
\item[--] 
 if $\lambda \geq 1/\inradius$, then
\begin{equation}\label{f:lcb}
\{u^\lambda > 0 \} = D_{1/\lambda}\cup\partial\Omega,\quad
\big \| \nabla u^{\lambda} \big  \| _{\infty} =  \lambda\,.
\end{equation}
\end{itemize} 
\end{lemma}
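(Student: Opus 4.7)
The plan is to follow the standard $p \to \infty$ playbook for Lipschitz-constrained variational problems, adapted to account for the free-boundary term. Testing minimality of $\upja$ against $v_{1/\lambda}$ (when $\lambda \geq 1/\inradius$) or against the constant $1$ (when $\lambda < 1/\inradius$) yields the $j$-uniform bound $\JLpja(\upja) \leq C$, and in particular
\[
\frac{1}{p_j}\int_\Omega \Big(\frac{|\nabla \upja|}{\lambda}\Big)^{p_j}\, dx \leq C.
\]
H\"older's inequality then gives $\|\nabla \upja\|_{L^q(\Omega)} \leq \lambda\, |\Omega|^{1/q-1/p_j}(C p_j)^{1/p_j}$ for every fixed $q > 1$ and $p_j > q$, so the sequence is bounded in $W^{1,q}(\Omega)$ for every $q$. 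A diagonal extraction along $q_k \to \infty$, combined with the compact embedding $W^{1,q}(\Omega) \hookrightarrow C^0(\overline\Omega)$ for $q > n$, produces a (not relabeled) subsequence converging weakly in every $W^{1,q}$ and uniformly on $\overline\Omega$ to some $u^\lambda \in \llipst_1(\Omega)$. Weak lower semicontinuity in each $W^{1,q}$, followed by $q \to \infty$, gives $\Lip{u^\lambda} \leq \lambda$.

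Next I would prove that $u^\lambda$ minimizes $\JLia$ over $\llipst_1(\Omega)$. Any competitor $v$ with $\JLia(v) < +\infty$ satisfies $\Lip{v} \leq \lambda$, whence $|\nabla v|/\lambda \leq 1$ a.e.\ and consequently $\JLpja(v) \to \JLia(v)$. On the other side, uniform convergence of $\upja$ to $u^\lambda$ yields $\chi_{\{u^\lambda > 0\}} \leq \liminf_j \chi_{\{\upja > 0\}}$ pointwise; discarding the nonnegative gradient term and applying Fatou's lemma, one obtains
\[
\JLia(u^\lambda) \leq \liminf_j \JLpja(\upja) \leq \lim_j \JLpja(v) = \JLia(v).
\]
Infinity harmonicity of $u^\lambda$ inside $\{u^\lambda > 0\}$ is then inherited from the $p_j$-harmonicity of $\upja$ in $\{\upja > 0\}$: on any $U \Subset \{u^\lambda > 0\}$, uniform positivity of $u^\lambda$ on $U$ ensures $U \subset \{\upja > 0\}$ for large $j$, and the Bhattacharya--DiBenedetto--Manfredi theorem identifies the $C^0$-limit of $p_j$-harmonic functions as a viscosity solution of $\Dinf u = 0$.

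To conclude I would distinguish the two regimes. If $\lambda < 1/\inradius$, the estimate $\Lip{u^\lambda} \leq \lambda$ together with the boundary condition $u^\lambda = 1$ on $\partial\Omega$ forces $u^\lambda \geq 1 - \lambda\inradius > 0$ uniformly on $\overline\Omega$, so uniform convergence gives $\{\upja > 0\} = \Omega$ for $j$ large. Comparing $\JLpja(\upja)$ with $\JLpja(1) = \lambda + \frac{p_j-1}{p_j}\Lambda|\Omega|$ then forces $\int_\Omega (|\nabla \upja|/\lambda)^{p_j}\, dx \leq 0$, hence $\upja \equiv 1$, and consequently $u^\lambda \equiv 1$. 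If $\lambda \geq 1/\inradius$, the same distance lower bound yields $D_{1/\lambda} \subseteq \{u^\lambda > 0\}$; minimality of $u^\lambda$ against $v_{1/\lambda}$ reads $\lambda + \Lambda|\{u^\lambda > 0\}| \leq \lambda + \Lambda|D_{1/\lambda}|$, so $|\{u^\lambda > 0\}| = |D_{1/\lambda}|$, and since $\{u^\lambda > 0\}$ is open and contains the open set $D_{1/\lambda}$ of equal measure, the two sets coincide inside $\Omega$. The identity $\Lip{u^\lambda} = \lambda$ then follows from the reverse estimate, obtained by chording from any $y \in \partial\Omega$ to its nearest point $z \in \partial\Omega_{1/\lambda}$, at which $u^\lambda(y) = 1$, $u^\lambda(z) = 0$, and $|y - z| = 1/\lambda$.

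The main obstacle I anticipate is the lower-semicontinuity inequality $\JLia(u^\lambda) \leq \liminf_j \JLpja(\upja)$: it is the measure term $\Lambda|\{u > 0\}|$ that is genuinely delicate, since it requires uniform (not just weak) convergence in order to apply Fatou to the characteristic functions $\chi_{\{\upja > 0\}}$, and it is precisely this term that shapes the structure of the limit $u^\lambda$ in both regimes.
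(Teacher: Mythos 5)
Your argument is correct and follows essentially the same route as the paper: a uniform bound on $\JLpja(\upja)$ obtained by testing against simple competitors, H\"older's inequality plus a diagonal extraction for the convergences \eqref{f:lca}, lower semicontinuity of the measure term from uniform convergence, stability of viscosity solutions for the infinity harmonicity, and comparison with $v_{1/\lambda}$ to identify the positivity set and the Lipschitz constant. One caveat on a stated justification: ``an open set containing an open subset of equal measure coincides with it'' is false in general (e.g.\ $(0,1)\supsetneq(0,1)\setminus\{1/2\}$), but the conclusion $\{u^\lambda>0\}\cap\Omega=D_{1/\lambda}$ does hold here because any point of $\{u^\lambda>0\}\cap\overline{\Omega_{1/\lambda}}$ would carry a ball on which $u^\lambda>0$, and (when $\Omega_{1/\lambda}\neq\emptyset$; otherwise $D_{1/\lambda}=\Omega$ and there is nothing to prove) such a ball meets the open set $\Omega_{1/\lambda}$ in a set of positive measure, contradicting $|\{u^\lambda>0\}|\leq|D_{1/\lambda}|$.
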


We prepone to the proof of Lemma \ref{l:conv} a useful observation:

\begin{remark}\label{r:mon} 
For every $u \in W ^{1, p} _ 1 (\Omega)$ and every fixed $\lambda$, the map
$p \mapsto J ^ {p, \lambda} _\Lambda (u)$ is monotone non-decreasing. We omit the proof of this property, which can be found in \cite[Proposition~1]{KawSha}. 
Moreover, it is readily  checked that the limit as $p \to + \infty$ of $J ^ {p, \lambda }  _\Lambda (u)$ is given precisely by 
the number $J ^\lambda _\Lambda (u) \in [0,+\infty]$ defined in \eqref{f:double}.
\end{remark}

\begin{proof}[Proof of Lemma \ref{l:conv}]  
Since
\[
\frac{1}{p}\int_\Omega \left(\frac{|\nabla \upa|}{\lambda}\right)^p\, dx + \lambda
\leq \JLpa(\upa)
\leq \JLpa(1) = \lambda + \frac{p-1}{p} \Lambda |\Omega|
\leq \lambda + \Lambda |\Omega|,
\]
we get 
\[
\|\nabla \upa\|_p \leq
\lambda  (p \Lambda |\Omega|)^{\frac{1}{p}}.
\]
For every fixed exponent $q\in (1, +\infty)$,  by H\"older inequality, 
for every $p > q$  it holds
\begin{equation}\label{f:estigupa}
\|\nabla\upa\|_q \leq \|\nabla\upa\|_p \, |\Omega|^{\frac{p-q}{pq}}
\leq 
\lambda  (p \Lambda |\Omega|)^{\frac{1}{p}} |\Omega|^{\frac{p-q}{pq}} = \lambda  (p \Lambda)^{\frac{1}{p}} |\Omega|^{\frac{1}{q}} 
\leq C,
\end{equation}
where $C>0$ is a constant independent of $p$.

Therefore, the family $(\upa)_p$ is uniformly bounded in $W^{1,q}(\Omega)$,
for every $q > 1$. 
Using a diagonal argument, we can construct an increasing sequence $p_j\to +\infty$
satisfying \eqref{f:lca}, for some $u^\lambda$.
Moreover, from \eqref{f:estigupa},
we deduce that
$\|\nabla u^\lambda\|_\infty \leq \lambda$.
Since $u^\lambda = 1$ on $\partial\Omega$, we conclude
that $u^\lambda\in \llipst_1(\Omega)$ and $\Lip{u^\lambda}\leq \lambda$.

\smallskip
The fact that $u ^\lambda$ is $\infty$-harmonic in its positivity set is a standard consequence of the fact that the functions
$u ^ {p_j,\lambda}$ are $p_j$-harmonic in their positivity set,  with  $p_j \to + \infty$, see for instance the arguments in \cite[proof of Theorem 1]{RossiTeix}. 

\smallskip
Let us prove that $u^\lambda$ is a minimizer of $\JLia$ in $\llipst_1(\Omega)$.
Let us fix $\varepsilon > 0$.
Since $\upja\to u^\lambda$ uniformly in $\overline{\Omega}$,
there exists an index $j_\varepsilon\in\N$ such that
\[
|\{u^\lambda > 0\}| \leq  |\{\upja > 0\}| + \varepsilon,
\qquad\forall j > j_\varepsilon\, ;
\]
moreover, for every $j> j_\varepsilon$, we have
\[
\JLpja(\upja) \geq \lambda + \frac{p_j-1}{p_j} \Lambda |\{\upja> 0\}|
\,.
\]
So we obtain
\[
\begin{split}
\JLpja(u^\lambda)
& \leq
\frac{1}{p_j}|\{u^\lambda > 0\}| + \lambda + \frac{p_j-1}{p_j} \Lambda |\{u^{\lambda} > 0\}|
\\ & \leq
\frac{1}{p_j} |\Omega |  + \lambda 
+ \frac{p_j-1}{p_j} \Lambda (|\{\upja > 0\}|   +  \varepsilon) 
\\ &  \leq
\frac{1}{p_j} |\Omega |   
+ \JLpja(\upja) + C\varepsilon. 
\end{split}
\]
For every $u\in\llipst_1(\Omega)$, passing to the limit as $j \to + \infty$ ({\it cf.} Remark \ref{r:mon}), we obtain
\[
\JLia(u^\lambda) = \lim_{j\to +\infty} \JLpja(u^\lambda) \leq
\liminf_{j\to+\infty} \JLpja(\upja)
\leq
\liminf_{j\to+\infty} \JLpja(u)
= \JLia(u),
\]
so that $u^\lambda$ is a minimizer of $\JLia$ in $\llipst_1(\Omega)$.

\smallskip
If $\lambda\in[0,1/\inradius)$,
then $u^\lambda \equiv 1$ (because $u ^ \lambda$ is $\infty$-harmonic in $\Omega$, with $u ^ \lambda = 1$ on $\partial \Omega$). 
Since $\upja \to 1$ uniformly, for $j$ large enough
then $\{\upja > 0\} = \overline{\Omega}$, hence $\upja \equiv 1$.

\smallskip
It remains to prove that, for $\lambda\geq 1/\inradius$, the positivity set $\{u^\lambda > 0 \}$ coincides with $D_{1/\lambda}\cup\partial\Omega$, and $\Lip{ u^{\lambda} } =  \lambda$. 
Since $\Lip{u ^\lambda}\leq \lambda$, we have  
$\{u^\lambda > 0 \} \supseteq D_{1/\lambda}\cup\partial\Omega$.
On the other hand, since $u^\lambda$ is a minimizer of $\JLia$ in $\llipst_1(\Omega)$, it minimizes
$|\{u>0\}|$ among all functions $u\in\llipst_1(\Omega)$
with $\Lip{u} \leq \lambda$, which implies (by taking the competitor $v _{1/\lambda}$)  that $|\{u^\lambda > 0\}|  \leq |D_{1/\lambda}|$. We infer that $\{u^\lambda > 0\}  = D_{1/\lambda}\cup\partial\Omega$.

Finally, as a consequence of \eqref{f:minvalue} and the equality 
$\{u^\lambda > 0\}  = D_{1/\lambda}\cup\partial\Omega$, we obtain that $\Lip{ u ^ \lambda } = \lambda$. 
\end{proof}

\bigskip
\begin{proof}[Proof of Theorem \ref{t:convmin}] 
Observe that the limit as $p\to +\infty$ in \eqref{f:min}
does exist,  thanks to Remark \ref{r:mon}. 
Let $u ^ {p, \lambda}$ and $u ^\lambda$ be as in Lemma \ref{l:conv}. 

For every $\lambda \geq 1/\inradius$ let
$v_{1/\lambda}(x) := \left [ 1-\lambda \dist(x,\partial\Omega) \right ]_+$, $x\in\overline{\Omega}$.
Clearly
\[
\{v_{1/\lambda} > 0\}\cap\Omega = D_{1/\lambda} = \{u^\lambda > 0\}\cap\Omega,
\qquad
|\nabla v_{1/\lambda}| = \lambda\
\text{a.e.\ in}\ D_{1/\lambda},
\]
so that $\JLpa(u^\lambda) \leq \JLpa(v_{1/\lambda})$.
Observe that
\begin{equation}\label{f:estiva}
\begin{split}
\JLpa(v_{1/\lambda}) & =
\frac{1}{p}|D_{1/\lambda}| + \lambda + \frac{p-1}{p} \Lambda |D_{1/\lambda}|
\leq
\frac{1}{p}|\Omega| + \lambda + \Lambda |D_{1/\lambda}|\,.
\end{split}
\end{equation}
Hence,
\begin{equation}\label{f:upbo}
\inf_{\lambda \geq 1/\inradius} \JLpa(\upa) \leq
\inf_{\lambda \geq 1/\inradius} \JLpa(u^\lambda) \leq 
\inf_{\lambda \geq 1/\inradius} \JLpa(v_{1/\lambda})
\leq
\frac{1}{p} 
|\Omega| + \frac{1}{r_\Lambda} + \Lambda |D_{r_\Lambda}| \,.
\end{equation}

Notice carefully that in the last inequality above we have exploited the assumption $\L \geq \L _\infty$, and we have used Lemma \ref{l:fl} (in particular, the definition of $r _\L$ given therein, and part (d) of the statement). 

In view of the upper bound obtained in \eqref{f:upbo}, and taking into account 
that $\JLpa(\upa) > \lambda$ for every $\lambda > 0$,
we see that the infimum w.r.t.\ $\lambda$ in \eqref{f:min}
can be taken for
$\lambda \in I = [1/\inradius, C_\Lambda]$, being $C_\Lambda := |\Omega| + \frac{1}{r_\Lambda} + \Lambda |D_{r_\Lambda}|$. 

From the explicit form of $\JLpa(v_{1/\lambda})$ in \eqref{f:estiva}
we also get
\[
\JLpa(v_{1/\lambda}) \geq 
\frac{p-1}{p}\left(\lambda + \Lambda |D_{1/\lambda}|\right)
\geq
\frac{p-1}{p}\left(\frac{1}{r_\Lambda} + \Lambda |D_{r_\Lambda}|\right),
\]
that, together with \eqref{f:estiva}-\eqref{f:upbo}, gives
\begin{equation}\label{f:limva}
\lim_{p\to +\infty}
\inf_{\lambda \geq 1/\inradius}
\JLpa(v_{1/\lambda})
=
\frac{1}{r_\Lambda} + \Lambda | D_{r_\Lambda}|.
\end{equation}

Let $(\lambda_k)\subset I$ be an enumeration of the rational points in $I$.
Since the map $\lambda\mapsto\JLpa(u)$ is continuous, it is easy to check that
\[
\inf\{\JLpa(\upa):\ \lambda \geq 1/\inradius\} =
\inf\{J_\Lambda^{p, \lambda_k}:\ k\in\N\},
\qquad\forall p>1.
\]

Using Lemma~\ref{l:conv} and a diagonal argument,
we can construct a sequence $p_j\to +\infty$ such that,
for every $q>1$, for $j\to +\infty$,
\begin{gather*}
u^{p_j, \lambda_k} \rightharpoonup u^{\lambda_k}\
\text{weakly in}\ W^{1,q}(\Omega),
\quad
u^{p_j, \lambda_k} \to u^{\lambda_k},\
\text{uniformly in}\ \overline{\Omega},
\qquad
\forall k\in\N.
\end{gather*}

For every $j\in\N$ let us choose $k_j\in\N$ such that
\begin{equation}\label{f:infa}
\inf\{\JLpja(\upja):\ \lambda \geq 1/\inradius\} \leq 
J_\Lambda^{p_j, \lambda_{k_j}}(u^{p_j, \lambda_{k_j}}) + \frac{1}{p_j}\,.
\end{equation}
Upon extracting a further subsequence (not relabelled),
we can assume that $\lambda_{k_j} \to \overline{\lambda} \in I$, and that
(again using Lemma~\ref{l:conv})
\[
u^{p_j, \overline{\lambda}} \rightharpoonup u^{\overline{\lambda}}\
\text{weakly in}\ W^{1,q}(\Omega),
\quad
u^{p_j, \overline{\lambda}} \to u^{\overline{\lambda}},\
\text{uniformly in}\ \overline{\Omega}.
\]

\medskip
\textbf{Claim:}
For every $\varepsilon > 0$ there exist $j_\varepsilon\in\N$ such that
\begin{equation}\label{f:estuv}
|\{u^{\lambda_{k_j}} > 0\}| = |D_{1/\lambda_{k_j}}| \leq
|\{u^{p_j, \lambda_{k_j}} > 0\}| + \varepsilon,
\qquad
\forall j\geq j_\varepsilon.
\end{equation}

\textsl{Proof of the claim.}
Let $j_0\in\N$ be such that
\begin{equation}\label{f:eskja}
|D_{1/\lambda_{k_j}}| \leq |D_{1/\overline{\lambda}}| + \frac{\varepsilon}{2}\,,
\qquad \forall j\geq j_0.
\end{equation}
Since $u^{p_j, \overline{\lambda}} \to u^{\overline{\lambda}}$
uniformly in $\overline{\Omega}$, there exists $j_\varepsilon\in\N$, $j_\varepsilon \geq j_0$, such that
\begin{equation}\label{f:eskjb}
|D_{1/\overline{\lambda}}| = |\{u^{\overline{\lambda}} > 0\}|
\leq |\{u^{p_j, \overline{\lambda}} > 0\}|+ \frac{\varepsilon}{2}\,,
\qquad \forall j\geq j_\varepsilon.
\end{equation}
From \eqref{f:eskja} and \eqref{f:eskjb} the claim follows.

\medskip 
Let us fix $\varepsilon>0$. 
Using the claim, for every 
$(p, \lambda) = (p_j, \lambda_{k_j})$ with
$j\geq j_\varepsilon$, 
we have that
\[
\JLpa(\upa) \geq \lambda + \frac{p-1}{p}\Lambda \left(|D_{1/\lambda}| - \varepsilon\right),
\]
hence
\[
\begin{split}
\JLpa(\upa) & \leq \JLpa(u^\lambda) \leq \JLpa(v_{1/\lambda})
= 
\frac{1}{p}|D_{1/\lambda}| + \lambda + \frac{p-1}{p} \Lambda |D_{1/\lambda}|
\\ & \leq
\frac{1}{p}|\Omega|
+ \frac{p-1}{p}\Lambda \varepsilon +
\JLpa(\upa).
\end{split}
\]
Finally, from \eqref{f:limva} and \eqref{f:infa} we conclude that \eqref{f:min} holds.
\end{proof}

\bigskip 
{\bf Acknowledgments.} 
We thank Marcello Ponsiglione for some useful discussions.

The authors have been supported by the Gruppo Nazionale per l'Analisi Matematica, 
la Probabilit\`a e le loro Applicazioni (GNAMPA) of the Istituto Nazionale di Alta Matematica (INdAM).

\def\cprime{$'$}
\begin{bibdiv}
\begin{biblist}

\bib{AlCa}{article}{
      author={Alt, {H. W.}},
      author={Caffarelli, {L. A.}},
       title={Existence and regularity for a minimum problem with free
  boundary},
        date={1981},
     journal={J. Reine Angew. Math.},
      volume={325},
       pages={105\ndash 144},
}

\bib{ACJ}{article}{
      author={Aronsson, Gunnar},
      author={Crandall, Michael~G.},
      author={Juutinen, Petri},
       title={A tour of the theory of absolutely minimizing functions},
        date={2004},
        ISSN={0273-0979},
     journal={Bull. Amer. Math. Soc. (N.S.)},
      volume={41},
      number={4},
       pages={439\ndash 505},
         url={http://dx.doi.org/10.1090/S0273-0979-04-01035-3},
      review={\MR{2083637}},
}

\bib{BaJeWa}{article}{
      author={Barron, E.~N.},
      author={Jensen, R.~R.},
      author={Wang, C.~Y.},
       title={The {E}uler equation and absolute minimizers of {$L^\infty$}
  functionals},
        date={2001},
     journal={Arch. Ration. Mech. Anal.},
      volume={157},
      number={4},
       pages={255\ndash 283},
}

\bib{Barron}{incollection}{
      author={Barron, {E.N.}},
       title={Viscosity solutions and analysis in {$L^\infty$}},
        date={1999},
   booktitle={Nonlinear analysis, differential equations and control
  ({M}ontreal, {QC}, 1998)},
      series={NATO Sci. Ser. C Math. Phys. Sci.},
      volume={528},
   publisher={Kluwer Acad. Publ.},
     address={Dordrecht},
       pages={1\ndash 60},
}

\bib{BDM}{article}{
      author={Bhattacharya, T.},
      author={DiBenedetto, E.},
      author={Manfredi, J.},
       title={Limits as {$p\to\infty$} of {$\Delta\sb pu\sb p=f$} and related
  extremal problems},
        date={1989},
        ISSN={0373-1243},
     journal={Rend. Sem. Mat. Univ. Politec. Torino},
      number={Special Issue},
       pages={15\ndash 68 (1991)},
        note={Some topics in nonlinear PDEs (Turin, 1989)},
}

\bib{BS09}{article}{
      author={Bianchini, {C.}},
      author={Salani, {P.}},
       title={Concavity properties for elliptic free boundary problems},
        date={2009},
     journal={Nonlinear Anal.},
      volume={71},
      number={10},
       pages={4461\ndash 4470},
}

\bib{Cran}{article}{
      author={Crandall, {M.G.}},
       title={A visit with the {$\infty$}-{L}aplace equation},
        date={2008},
 		book={title={Calculus of variations and nonlinear partial differential
  equations}, series={Lecture Notes in Math.}, publisher={Springer},
     address={Berlin}, volume={1927},
},      
       pages={75\ndash 122},
}

\bib{Cg}{article}{
      author={Crasta, {G.}},
       title={Estimates for the energy of the solutions to elliptic {D}irichlet
  problems on convex domains},
        date={2004},
     journal={Proc.\ Roy.\ Soc.\ Edinburgh Sect.~A},
      volume={134},
       pages={89\ndash 107},
}

\bib{CF8}{misc}{
      author={Crasta, {G.}},
      author={Fragal{\`a}, {I.}},
       title={Bernoulli free boundary problem for the infinity laplacian},
        date={(2018)},
        note={preprint arXiv:1804.08573},
}

\bib{DaKa2010}{article}{
      author={Daners, {D.}},
      author={Kawohl, {B.}},
       title={An isoperimetric inequality related to a {B}ernoulli problem},
        date={2010},
        ISSN={0944-2669},
     journal={Calc. Var. Partial Differential Equations},
      volume={39},
      number={3-4},
       pages={547\ndash 555},
         url={https://doi.org/10.1007/s00526-010-0324-4},
      review={\MR{2729312}},
}

\bib{DanPet}{article}{
      author={Danielli, {D.}},
      author={Petrosyan, {A.}},
       title={A minimum problem with free boundary for a degenerate quasilinear
  operator},
        date={2005},
        ISSN={0944-2669},
     journal={Calc. Var. Partial Differential Equations},
      volume={23},
      number={1},
       pages={97\ndash 124},
         url={https://doi.org/10.1007/s00526-004-0294-5},
      review={\MR{2133664}},
}

\bib{EvSm}{article}{
      author={Evans, {L.C.}},
      author={Smart, {C.K.}},
       title={Everywhere differentiability of infinity harmonic functions},
        date={2011},
     journal={Calc. Var. Partial Differential Equations},
      volume={42},
       pages={289\ndash 299},
}

\bib{FR97}{article}{
      author={Flucher, M.},
      author={Rumpf, M.},
       title={Bernoulli's free-boundary problem, qualitative theory and
  numerical approximation},
        date={1997},
     journal={J. Reine Angew. Math.},
      volume={486},
       pages={165\ndash 204},
}

\bib{Jen}{article}{
      author={Jensen, {R.}},
       title={Uniqueness of {L}ipschitz extensions: minimizing the sup norm of
  the gradient},
        date={1993},
        ISSN={0003-9527},
     journal={Arch.\ Rational Mech.\ Anal.},
      volume={123},
       pages={51\ndash 74},
}

\bib{KawSha}{article}{
      author={Kawohl, {B.}},
      author={Shahgholian, {H.}},
       title={Gamma limits in some {B}ernoulli free boundary problem},
        date={2005},
        ISSN={0003-889X},
     journal={Arch. Math. (Basel)},
      volume={84},
      number={1},
       pages={79\ndash 87},
         url={https://doi.org/10.1007/s00013-004-1334-2},
      review={\MR{2106407}},
}

\bib{RossiTeix}{article}{
      author={Rossi, {J.D.}},
      author={Teixeira, {E.V.}},
       title={A limiting free boundary problem ruled by {A}ronsson's equation},
        date={2012},
        ISSN={0002-9947},
     journal={Trans. Amer. Math. Soc.},
      volume={364},
      number={2},
       pages={703\ndash 719},
         url={https://doi.org/10.1090/S0002-9947-2011-05322-5},
      review={\MR{2846349}},
}

\end{biblist}
\end{bibdiv}

\end{document}